\theoremstyle{plain}
\newtheorem{thm}{Theorem}
\numberwithin{thm}{section}
\newtheorem{prop}[thm]{Proposition}
\newtheorem{lemma}[thm]{Lemma}
\theoremstyle{definition}
\newtheorem{defn}{Definition}
\numberwithin{defn}{section}
\numberwithin{ex}{section}
\numberwithin{nota}{section}
\numberwithin{remark}{section}
\newcommand{\yo}{\text{\usefont{U}{min}{m}{n}\symbol{'110}}}
\DeclareFontFamily{U}{min}{}
\DeclareFontShape{U}{min}{m}{n}{<-> dmjhira}{}
\renewcommand{\epsilon}{\varepsilon}
\renewcommand{\phi}{\varphi}
\renewcommand{\lim}{\operatorname{lim}}
\newcommand{\comma}[2]			
{\mbox{$(#1\!\downarrow\!#2)$}}
\newcommand{\Psh}{\mathbf{Psh}}
\newcommand{\dcat}{\mathbb{D}}
\newbox\xrat@below
\newbox\xrat@above
\newcommand{\xrightarrowtail}[2][]{%
	\setbox\xrat@below=\hbox{\ensuremath{\scriptstyle #1}}%
	\setbox\xrat@above=\hbox{\ensuremath{\scriptstyle #2}}%
	\pgfmathsetlengthmacro{\xrat@len}{max(\wd\xrat@below,\wd\xrat@above)+.6em}%
	\mathrel{\tikz [>->,baseline=-.75ex]
		\draw (0,0) -- node[below=-2pt] {\box\xrat@below}
		node[above=-2pt] {\box\xrat@above}
		(\xrat@len,0) ;}}
\tikzset{Rightarrow/.style={double equal sign distance,>={Implies},->},
	triple/.style={-,preaction={draw,Rightarrow}}}
\begin{document}

\title{Stackification via adjunction}

\author{Wei Zheng \\ \href{}{w1036401560@gmail.com}}

\maketitle

\begin{abstract}
	We establish a form of 2-adjunction (tentatively termed the *fundamental 2-adjunction*), building on the fundamental adjunction proposed by Olivia Caramello and Riccardo Zanfa, which provides a constructive method for the associated stack functor. 
	Additionally, we investigate 2-local homeomorphisms through the lens of indexed fibrations.
\end{abstract}

\tableofcontents

\section{Introduction}

Olivia Caramello and Riccardo Zanfa proposed the fundamental adjunction\cite{caramellozanfa}
\[\begin{tikzcd}
	{\mathbf{Ind}_{\mathcal{C}}} &&& {\mathbf{Topos}^{\mathrm{co}}/\mathbf{Sh}(\mathcal{C},J)}
	\arrow[""{name=0, anchor=center, inner sep=0}, "\Lambda_{\mathbf{Topos}^{\mathrm{co}}/\mathbf{Sh}(\mathcal{C},J)}"', curve={height=18pt}, from=1-1, to=1-4]
	\arrow[""{name=1, anchor=center, inner sep=0}, "\Gamma_{\mathbf{Topos}^{\mathrm{co}}/\mathbf{Sh}(\mathcal{C},J)}"', curve={height=18pt}, from=1-4, to=1-1]
	\arrow["\dashv"{anchor=center, rotate=-90}, draw=none, from=1, to=0]
\end{tikzcd}\]

The classical result $\Lambda\circ\Gamma\cong \iota_J a_J$ is obtained after this adjunction is restricted to the case of presheaves.
\[\begin{tikzcd}
	{\Psh(\cal C)} &&& {\mathbf{Topos}^{\mathrm{co}}/_{1}\mathbf{Sh}(\mathcal{C},J)}
	\arrow[""{name=0, anchor=center, inner sep=0}, "{\Lambda}"', curve={height=18pt}, from=1-1, to=1-4]
	\arrow[""{name=1, anchor=center, inner sep=0}, "{\Gamma}"', curve={height=18pt}, from=1-4, to=1-1]
	\arrow["\dashv"{anchor=center, rotate=-90}, draw=none, from=1, to=0]
\end{tikzcd}\]

We want to know if we have 
\[\Gamma_{\mathbf{Topos}^{\mathrm{co}}/\mathbf{Sh}(\mathcal{C},J)} \circ \Lambda_{\mathbf{Topos}^{\mathrm{co}}/\mathbf{Sh}(\mathcal{C},J)}\simeq i_J\circ s_J\] 
where $s_J$ is denoted to the stackification. At least we want to find all such $\dcat$ that 
\[\Gamma_{\mathbf{Topos}^{\mathrm{co}}/\mathbf{Sh}(\mathcal{C},J)} \circ \Lambda_{\mathbf{Topos}^{\mathrm{co}}/\mathbf{Sh}(\mathcal{C},J)}(\dcat) \simeq i_J\circ s_J (\dcat)\]

We haven't solved the problem yet. But here we get an interesting result. We simply modified the fundamental adjunction (see Theorem \ref{gamma lambda})
\[\begin{tikzcd}
	{\mathbf{Ind}_{\mathcal{C}}} &&& {\textit{2-}\mathbf{Topos}^{\mathrm{co}}/_{2}\mathbf{St}(\mathcal{C},J)}
	\arrow[""{name=0, anchor=center, inner sep=0}, "{\Lambda_{\textit{2-}\mathbf{Topos}^{\mathrm{co}}/_2{}\mathbf{St}(\mathcal{C},J)}}"', curve={height=24pt}, tail reversed, no head, from=1-4, to=1-1]
	\arrow[""{name=1, anchor=center, inner sep=0}, "{\Gamma_{\textit{2-}\mathbf{Topos}^{\mathrm{co}}/_{2}\mathbf{St}(\mathcal{C},J)}}", curve={height=-24pt}, from=1-4, to=1-1]
	\arrow["\dashv"{anchor=center, rotate=-90}, draw=none, from=0, to=1]
\end{tikzcd}\]
Surprisingly, we have this result (Theorem \ref{stackification problem})
\[
	\Gamma_{\textit{2-}\mathbf{Topos}^{\mathrm{co}}/_{2}\mathbf{St}(\mathcal{C},J)}\circ \Lambda_{\textit{2-}\mathbf{Topos}^{\mathrm{co}}/_2{}\mathbf{St}(\mathcal{C},J)}
	\cong i_J\circ s_J
\]

We also proposed an equivalent description of 2-local homeomorphism, see Theorem \ref{slice of 2-topos}

\section{Stackification via double plus construction}
\begin{defn}
	Given a site $(\mathcal{C},J)$
	\begin{enumerate}[(i)]
		\item A descent datum in $\mathbb{D}$ for a $J$-covering sieve $R$ is a pseudocone $\Delta1 \rightarrow \mathbb{D}\circ (\pi_R)^{op}$, 
    		  where $\pi_R:\textstyle \int R\rightarrow C$ is the corresponding discrete fibration associated to $R$.
		\item If $S=\{f_i:\mathrm{\mathrm{dom}}(f_i)\mid i\in I\}$ is a $J$-covering family, 
		then a descent datum in $\mathbb{D}$ for $S$ is a collection data $(U_{f_i},\alpha_{h,k})$ builded by objects $U_{f_i}\in \mathbb{D}(\mathrm{\mathrm{dom}}(f_i))$ for each $i\in I$
		and for any commutative square in $\mathcal{C}$
\[\begin{tikzcd}
	E && {\mathrm{\mathrm{dom}}(f_i)} \\
	\\
	{\mathrm{\mathrm{dom}}(f_j)} && D
	\arrow["k", from=1-1, to=1-3]
	\arrow["h"', from=1-1, to=3-1]
	\arrow["{f_i}", from=1-3, to=3-3]
	\arrow["{f_j}"', from=3-1, to=3-3]
\end{tikzcd}\]
	
    an isomorphim $\alpha_{h,k,j,i}:\mathbb{E}(h)(U_{f_j})\cong \mathbb{E}(k)(U_{f_i})$. 
	If $i=j$, we require that $\alpha_{h,k,j,i}$ is the identity.
	Moreover, all such isomorphisms $\alpha_{h,k,j,i}$ satisfies the coherent conditions: 
	\begin{enumerate}
		\item for any such commutative diagram in $\mathcal{C}$
\[\begin{tikzcd}
	&& {\mathrm{\mathrm{dom}}(f_i)} \\
	\\
	E && {\mathrm{\mathrm{dom}}(f_l)} && D \\
	\\
	&& {\mathrm{\mathrm{dom}}(f_j)}
	\arrow["{f_i}", from=1-3, to=3-5]
	\arrow["k", from=3-1, to=1-3]
	\arrow["h"', from=3-1, to=5-3]
	\arrow["u", from=3-3, to=3-1]
	\arrow["{f_l}", from=3-5, to=3-3]
	\arrow["{f_j}"', from=5-3, to=3-5]
\end{tikzcd}\]
	the isomorphism $\alpha_{h,k,j,i}:\mathbb{E}(h)(U_{f_j})\cong \mathbb{E}(k)(U_{f_i})$ is the composite of 
	$\alpha_{h,u,j,l}:\mathbb{E}(h)(U_{f_j})\cong \mathbb{E}(u)(U_{f_l})$ and $\alpha_{u,k,l,i}:\mathbb{E}(u)(U_{f_l})\cong \mathbb{E}(k)(U_{f_i})$.
		\item If  $i=j$, then the corresponding coherent isomorphism $\alpha_{h,k,j,i}$ is idendity.
	\end{enumerate}
	
	\end{enumerate}
	The morphism between two desecnt datums $(U_{f_i},\alpha_{h,k,j,i})$ and $(V_{f_i},\beta_{h,k,j,i})$ 
	is a collection of morphisms $(\delta_i:U_{f_i}\rightarrow V_{f_i})$ such that for any such commutative square in $\mathcal{C}$
\[\begin{tikzcd}
	E && {\mathrm{\mathrm{dom}}(f_i)} \\
	\\
	{\mathrm{\mathrm{dom}}(f_j)} && D
	\arrow["k", from=1-1, to=1-3]
	\arrow["h"', from=1-1, to=3-1]
	\arrow["{f_i}", from=1-3, to=3-3]
	\arrow["{f_j}"', from=3-1, to=3-3]
\end{tikzcd}\]

We have the following commutative diagram in $\mathbb{D}(E)$

\[\begin{tikzcd}
	{\mathbb{D}(k)(U_{f_i})} && {\mathbb{D}(k)(V_{f_i})} \\
	{\mathbb{D}(h)(U_{f_j})} && {\mathbb{D}(h)(V_{f_j})}
	\arrow["{\mathbb{D}(k)(\delta_{f_i})}", from=1-1, to=1-3]
	\arrow["\sim"{marking, allow upside down}, shift right=2, draw=none, from=1-3, to=2-3]
	\arrow["{\alpha_{h,k,j,i}}", from=2-1, to=1-1]
	\arrow["\sim"{marking, allow upside down}, shift right=2, draw=none, from=2-1, to=1-1]
	\arrow["{\mathbb{D}(h)(\delta_{f_i})}"', from=2-1, to=2-3]
	\arrow["{\beta_{h,k,j,i}}"', from=2-3, to=1-3]
\end{tikzcd}\]
\end{defn}

\begin{thm}[double plus]
    \label{stackification}
	Given a site $(\mathcal{C},J)$. There is a functor $s_J:\mathbf{Fib}_{\mathcal{C}}\rightarrow \mathbf{St}(\mathcal{C},J)$ called the stackification satisfies
\[\begin{tikzcd}
	{\mathbf{Fib}_{\mathcal{C}}} &&& {\mathbf{St}(\mathcal{C},J)}
	\arrow[""{name=0, anchor=center, inner sep=0}, "{i_J}"', from=1-1, to=1-4]
	\arrow[""{name=1, anchor=center, inner sep=0}, "{s_J}"', curve={height=24pt}, from=1-4, to=1-1]
	\arrow["\dashv"{anchor=center, rotate=-96}, draw=none, from=1, to=0]
\end{tikzcd}\]
\end{thm}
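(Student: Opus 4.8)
The plan is to realize $s_J$ as the twofold iterate of a \emph{plus construction} $(-)^{+}\colon\mathbf{Fib}_{\mathcal{C}}\to\mathbf{Fib}_{\mathcal{C}}$, in direct analogy with the presheaf-to-sheaf situation where $a_J = (-)^{++}$. First I would fix a fibration, equivalently a pseudofunctor $\mathbb{D}\colon\mathcal{C}^{\mathrm{op}}\to\mathbf{Cat}$, and for each object $C$ set
\[
\mathbb{D}^{+}(C)\ :=\ \colim_{R\in J(C)^{\mathrm{op}}}\ \mathrm{Desc}(R,\mathbb{D}),
\]
the filtered bicolimit in $\mathbf{Cat}$ of the categories of descent data of the preceding Definition, with transition functors given by restricting a descent datum along an inclusion $R'\subseteq R$ of covering sieves. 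Here $J(C)$ is cofiltered because covering sieves are stable under binary intersection, so the bicolimit is genuinely filtered and in particular commutes with finite limits. One then has to (a) construct pullback functors $\mathbb{D}^{+}(g)\colon\mathbb{D}^{+}(D)\to\mathbb{D}^{+}(C)$ for $g\colon C\to D$ by transporting covers and descent data along $g$, together with the coherence isomorphisms making $\mathbb{D}^{+}$ a pseudofunctor; (b) note that the maximal sieve provides a unit $\eta_{\mathbb{D}}\colon\mathbb{D}\to\mathbb{D}^{+}$; and (c) verify that $\mathbb{D}\mapsto\mathbb{D}^{+}$ and $\eta$ are 2-natural, i.e.\ that $(-)^{+}$ is a 2-endofunctor of $\mathbf{Fib}_{\mathcal{C}}$ carrying a 2-natural transformation $\mathrm{id}\Rightarrow(-)^{+}$.

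The heart of the proof is the pair of lemmas that makes two applications of $(-)^{+}$ suffice. \textbf{Lemma A:} for every fibration $\mathbb{D}$, the fibration $\mathbb{D}^{+}$ is a \emph{separated prestack}, i.e.\ for every $J$-covering sieve $R$ the comparison functor $\mathbb{D}^{+}(C)\to\mathrm{Desc}(R,\mathbb{D}^{+})$ is fully faithful; this I would deduce from the classical fact that $F^{+}$ is a separated presheaf, applied to the hom-presheaves $\mathrm{Hom}_{\mathbb{D}}(x,y)$, combined with a diagram chase that absorbs the gluing isomorphisms $\alpha_{h,k,j,i}$ by means of the coherence conditions in the Definition. \textbf{Lemma B:} if $\mathbb{D}$ is already a separated prestack, then $\mathbb{D}^{+}$ is a stack; full faithfulness of $\mathbb{D}^{+}(C)\to\mathrm{Desc}(R,\mathbb{D}^{+})$ is inherited, and the new content is effectivity of descent, which holds because a descent datum for a cover of $C$ \emph{is}, by construction, an object of the bicolimit $\mathbb{D}^{+}(C)$ --- filteredness is precisely what lets one represent a descent datum valued in $\mathbb{D}^{+}$ by one valued in $\mathbb{D}$ over a common refinement. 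Granting A and B, $\mathbb{D}^{++}$ is a stack for arbitrary $\mathbb{D}$, so we may define $s_J := (-)^{++}$ and take $i_J$ to be the (fully faithful) inclusion $\mathbf{St}(\mathcal{C},J)\hookrightarrow\mathbf{Fib}_{\mathcal{C}}$, with unit $\eta^{+}_{\mathbb{D}}\circ\eta_{\mathbb{D}}$.

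For the 2-adjunction $s_J\dashv i_J$ I would establish the universal property directly: for every fibration $\mathbb{D}$ and every stack $\mathbb{E}$, precomposition with the unit induces an equivalence of hom-categories
\[
\mathbf{St}(\mathcal{C},J)\bigl(s_J\mathbb{D},\mathbb{E}\bigr)\ \xrightarrow{\ \sim\ }\ \mathbf{Fib}_{\mathcal{C}}\bigl(\mathbb{D},i_J\mathbb{E}\bigr),
\]
pseudonatural in both arguments. Via the two applications of $(-)^{+}$ this reduces to the single assertion that for any $\mathbb{D}$ and any separated prestack $\mathbb{E}$ precomposition with $\eta_{\mathbb{D}}$ gives an equivalence $\mathbf{Fib}_{\mathcal{C}}(\mathbb{D}^{+},\mathbb{E})\xrightarrow{\sim}\mathbf{Fib}_{\mathcal{C}}(\mathbb{D},\mathbb{E})$, that is, $\eta_{\mathbb{D}}$ is orthogonal to separated prestacks. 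Essential surjectivity here is an extension problem: a pseudonatural morphism $F\colon\mathbb{D}\to\mathbb{E}$ is extended along $\eta_{\mathbb{D}}$ by sending a descent datum of $\mathbb{D}$ to the gluing of its image in $\mathbb{E}$, which exists and is canonical because $\mathbb{E}$ is a (separated) prestack; full faithfulness of the precomposition functor then encodes uniqueness of this extension up to unique coherent isomorphism. One also uses that $\eta_{\mathbb{E}}\colon\mathbb{E}\to\mathbb{E}^{+}$ is an equivalence whenever $\mathbb{E}$ is a stack, so that $i_Js_J\mathbb{E}\simeq\mathbb{E}$ and the triangle identities are satisfied; packaging the resulting natural equivalences into an honest 2-adjunction is then a formal check of the mates.

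I expect the main obstacle to be the management of two-dimensional coherence throughout: confirming that $\mathbb{D}^{+}(C)$ is genuinely a filtered bicolimit and therefore interacts correctly both with the pullback functors and with finite limits; showing that $(-)^{+}$ is a pseudofunctor \emph{in the variable} $\mathbb{D}$ and $\eta$ a pseudonatural transformation, not merely an assignment on objects; and, above all, proving Lemma A and the orthogonality statement, whose arguments require translating the explicit cocycle data $(U_{f_i},\alpha_{h,k,j,i})$ into manipulations of pseudocones and chasing the attendant coherence diagrams. The conceptual skeleton is the classical Grothendieck--Giraud theory of descent, but carrying it through with the descent-datum presentation adopted here is where the real work lies.
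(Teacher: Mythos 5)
Your proposal is essentially the paper's own argument: the paper also defines $\mathbb{D}^{+}$ so that its fibre over $X$ is the category of descent data over $J$-covering sieves of $X$ with morphisms identified on common refinements (your filtered bicolimit, presented via the total category $\mathcal{G}(\mathbb{D}^{+})$), proves the same two lemmas (the plus of anything is a prestack, the plus of a prestack is a stack, the latter via transitivity and composite sieves), and sets $s_J=(-)^{++}$. Your treatment of the unit's orthogonality to prestacks and of the triangle identities is in fact more explicit than the paper's, which simply asserts that $[p_{\mathbb{D}^{++}}]$ is the 2-dimensional reflection.
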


\begin{proof}
    Consider an indexed category $\mathbb{D}$.
    We define $\mathcal{G}(\mathbb{D}^{+})$ by 
    \begin{enumerate}[(i)]
        \item objects: $(X,a:R\rightarrow \mathbb{D})$ where $a$ is a descent datum in $\mathbb{D}$ for a $J$-covering sieve $R$.
        \item morphisms: 
        \[(y,\alpha):(Y,b:S\rightarrow \mathbb{D})\rightarrow(X,a:R\rightarrow \mathbb{D})\]
        $y:Y\rightarrow X$ is a morphism in $\mathcal{C}$ and 
        $\alpha:(b:S\rightarrow \mathbb{D})\rightarrow (a\circ y_*):y^*(R)\rightarrow R \rightarrow \mathbb{D})$ 
        is a $\mathcal{C}$-indexed functor $(b\circ i_T:T\rightarrow \mathbb{D})\rightarrow (a\circ y_*\circ i_T:T\rightarrow \mathbb{D})$
        where $T$ is a common refinement of $S$ and $y^*(R)$ in $J(Y)$. 
        Two morphisms $(y,\alpha)$ and $(z,\beta)$ are equal iff $y=z$ and $\alpha$ is locally equals to $\beta$  (i.e. equal on some common refinements).
    \end{enumerate}
    the projection $p_{\mathbb{D}^+}:\mathcal{G}(\mathbb{D}^+)\rightarrow C$ sends $(X,a:R\rightarrow \mathbb{D})$ to $X$.
    
    Thus we have a obvious functor $\mathcal{G}(\eta):\mathcal{G}(\mathbb{D})\rightarrow\mathcal{G}(\mathbb{D}^+)$ 
\[\begin{tikzcd}
	{\mathcal{G}(\mathbb{D})} && {\mathcal{G}(\mathbb{D}^+)} \\
	\\
	& {\mathcal{C}}
	\arrow["{\mathcal{G}(\eta)}", from=1-1, to=1-3]
	\arrow[""{name=0, anchor=center, inner sep=0}, "{p_{\mathbb{D}}}"', from=1-1, to=3-2]
	\arrow[""{name=1, anchor=center, inner sep=0}, "{p_{\mathbb{D}^+}}", from=1-3, to=3-2]
	\arrow[shift left=5, shorten <=7pt, shorten >=7pt, Rightarrow, no head, from=0, to=1]
\end{tikzcd}\]
We note that $\mathbb{D}$ is a $J$-prestack iff $\mathcal{G}(\eta)$ is fully faithful and $\mathbb{D}$ is a $J$-stack iff $\mathcal{G}(\eta)$ is an equivalence.
$\mathbb{D}^+$ is necessarily a $J$-prestack by the very definition of $\mathcal{G}(\mathbb{D}^+)$. 
The functor $\eta:\mathbb{D}\rightarrow \mathbb{D}^+$ is a kind of 2-dimentional universal arrow analoge to the case of $J$-sheaves.

We shall show that $\mathbb{D}^+$ is a $J$-stack if $\mathbb{D}$ is itself a $J$-prestack. Given such diagram
\[\begin{tikzcd}
	R && {\mathbb{D}^+} \\
	\\
	{\yo(X)}
	\arrow["a", from=1-1, to=1-3]
	\arrow["{\textnormal{covering sieve}}"', tail, from=1-1, to=3-1]
\end{tikzcd}\]
The $\mathcal{C}$-indexed functor $a$ sends $[y:Y\rightarrow X]\in R(Y)$ to $a_f\in \mathbb{D}^+(Y)$. 
Note that $a_f$ is a descent datum for $\mathbb{D}$ on a covering $R^f$ (i.e. $R^f\rightarrow \mathbb{D}$)
 and the family $\{a_f\mid f\in R\in J(X)\}$ is compatible by the pseudonaturality.
By the transitivity of Grothendiect topology, we may form the $J$-covering $R*\{R^f\mid f\in R\}$ and a correspondense (in a pseudo sense)
$b:(g\in R^f,f\in R)\mapsto a_{f,g}:=(a_f)_{\mathrm{dom}(g)}([g])$.

We want to show $b:R*\{R^f\mid f\in R\}\rightarrow \mathbb{D}$ is a well-defined descent datum. That is if $f,f'\in R$ and $g\in R^f,g\in R^{f'}$ 
such that $gf=g'f'$ there should be a coherent isomorphism $a_{f,g}\cong a_{f',g'}$ in $\mathbb{D}(\mathrm{dom}(g))$.
It's suffices to check $a_{f,g}$ and $a_{f',g'}$ are locally isomorphic since $\mathbb{D}$ is already a $J$-prestack. 

One could construct $T_{f,h}\subseteq h^*(R^f)\cap R^{fh}$ to be the common refinement of $\mathbb{D}^+(h)(a_f)\cong a_{fh}$ in $\mathbb{D}^+(\mathrm{dom}(h))$
where the coherent isomorphism comes from the pseudonatural constraints of $a:R\rightarrow \mathbb{D}^+$. 
Then $T_{f,g}\cap T_{f',g'}\subseteq R^{fg}=R^{f'g'}$. We have for each $k\in T_{f,g}\cap T_{f',g'}$
\[
    \mathbb{D}(k)(a_{f,g})\cong a_{f,gk}\cong a_{fg,k}= a_{f'g',k}\cong a_{f',g'k} \cong \mathbb{D}(k)(a_{f',g'})
\]
The first isomorphims comes from the pseudonatural constraints of $a_f:R^f\rightarrow \mathbb{D}$. The second comes from the coherent isomorphism of
$\mathbb{D}^+(g)(a_f)\cong a_{fg}$. The third comes from the coherent isomorphism $\mathbb{D}^+(g')(a_f')\cong a_{f'g'}$. 
The fourth is followed by the pseudonatural constraints of $a_{f'}:R^{f'}\rightarrow \mathbb{D}$.

So $b$ is a well-defined descent datum satisfying $\mathbb{D}^+(f)(b)\cong a_f$ for each $f\in R$ in a coherent way.
We have thus shown $\mathcal{G}(\eta):\mathcal{G}(\mathbb{D})\rightarrow \mathcal{G}(\mathbb{D}^+)$ is essential surjective.
Because $\mathcal{G}(\eta)$ is aleady fully faithful by assumption of $\mathbb{D}$, 
we conclude that $p_{\mathbb{D}^+}:\mathcal{G}(\mathbb{D}^+)\rightarrow \mathcal{C}$ is a $J$-stack.

We define $s_J([p_{\mathbb{D}}])$ to be $[p_{\mathbb{D}^{++}}:\mathcal{G}(\mathbb{D}^{++})\rightarrow \mathcal{C}]$.
We conclude that $[p_{\mathbb{D}^{++}}]$ is the 2-dimentional reflection of $[p_{\mathbb{D}}]$ so our thesis follows.

\end{proof}

\section{$\Lambda_{\textit{2-}\mathbf{Topos}^{\mathrm{co}}/_2{}\mathbf{St}(\mathcal{C},J)}\dashv\Gamma_{\textit{2-}\mathbf{Topos}^{\mathrm{co}}/_{2}\mathbf{St}(\mathcal{C},J)}$}\label{chap:relative_sites}

In this section, we gives the explicit constructions of $\Lambda_{\textit{2-}\mathbf{Topos}^{\mathrm{co}}/_2{}\mathbf{St}(\mathcal{C},J)}$ and $\Gamma_{\textit{2-}\mathbf{Topos}^{\mathrm{co}}/_{2}\mathbf{St}(\mathcal{C},J)}$.

\begin{defn}
	Here we gives the explicit constructions of $\Lambda_{\textit{2-}\mathbf{Topos}^{\mathrm{co}}/_2{}\mathbf{St}(\mathcal{C},J)}$ and $\Gamma_{\textit{2-}\mathbf{Topos}^{\mathrm{co}}/_{2}\mathbf{St}(\mathcal{C},J)}$.
	\begin{itemize}
		\item Define the functor $\Lambda_{\textit{2-}\mathbf{Topos}^{\mathrm{co}}/\mathbf{St}(\mathcal{C},J)}$ as
	\[\begin{tikzcd}[sep=scriptsize]
		{\mathbf{Ind}_{\mathcal{C}}} && {\mathbf{Fib}_{\mathcal{C}}} && {\mathrm{Com}_{\mathrm{cont}}/(\mathcal{C},J)} && {\textit{2-}\mathbf{Topos}^{\mathrm{co}}/_{2}\mathbf{St}(\mathcal{C},J)}
		\arrow["{\mathcal{G}}", from=1-1, to=1-3]
		\arrow["{\mathfrak{G}}", from=1-3, to=1-5]
		\arrow["{C_{(-)}^{\mathbf{St}}}", from=1-5, to=1-7]
	\end{tikzcd}\]
		The first of these is the Grothendieck construction
	and the second $p:\mathcal{D}\rightarrow \mathcal{C}$ outputs the result as $p:(\mathcal{D},J_{\mathcal{D}})\rightarrow (\mathcal{C},J)$.
	Finally, the third functor is the canonical one which derive geometric morphisms from comorphisms of sites.
		\item 
		Define the functor
		\[
\Gamma_{\textit{2-}\mathbf{EssTopos}^{\mathrm{co}}/_{2}\mathbf{St}(\mathcal{C},J)}:
\textit{2-}\mathbf{EssTopos}^{\mathrm{co}}/_{2}\mathbf{St}(\mathcal{C},J) \rightarrow \mathbf{Ind}_{\mathcal{C}}
		\]
		as
\[
? \mapsto \textit{2-}\mathbf{EssTopos}^{\mathrm{co}}/_{2}\mathbf{St}(\mathcal{C},J)(\mathbf{St}(\mathcal{C/-},J_{(-)}),?)
\]
	\end{itemize}
\end{defn}

\begin{lemma}
	\label{locally check stack}
	Consider the Giraud's topology $(\mathcal{D},J_{\mathcal{D}})$ for a (Street) fibration $p:\mathcal{D}\rightarrow \mathcal{C}$. 
	An $\mathcal{D}$-indexed category $\mathbb{E}$ is a $J_{\mathcal{D}}$-stack iff $\mathbb{E}\circ (F_{(A,\alpha)})^{op}$ is a $J_X$-stack 
	for every $X$ and every $(A,\alpha)$ in the essential fibre of $p$ at $X$.
	
\end{lemma}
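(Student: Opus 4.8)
The strategy is to reduce to the characterisation of (pre)stacks supplied by Theorem~\ref{stackification}: an indexed category $\mathbb{E}$ is a $J$-prestack exactly when $\mathcal{G}(\eta_{\mathbb{E}})\colon\mathcal{G}(\mathbb{E})\to\mathcal{G}(\mathbb{E}^{+})$ is fully faithful, and a $J$-stack exactly when it is an equivalence, and to combine this with the fact that $\mathcal{G}(\eta_{\mathbb{E}})$ is a morphism of fibrations over $\mathcal{D}$ and hence an equivalence iff it is so fibrewise. The relevant charts are the cartesian-lift functors $F_{(A,\alpha)}\colon\mathcal{C}/X\to\mathcal{D}$, $(f\colon Y\to X)\mapsto(\alpha^{-1}f)^{*}A$, attached to an object $(A,\alpha)$ of the essential fibre of $p$ at $X$. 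I would first record their elementary properties: $F_{(A,\alpha)}$ sends a $J_X$-cover of $f$ to a family generating a $J_{\mathcal{D}}$-cover of $F_{(A,\alpha)}(f)$, and every $J_{\mathcal{D}}$-covering sieve of $F_{(A,\alpha)}(f)$ pulls back along $F_{(A,\alpha)}$ to a $J_X$-covering sieve of $f$ --- both immediate from the definition of the Giraud topology, whose covers are generated by cartesian lifts of $J$-covers; moreover, for any $D\in\mathcal{D}$ the covering sieves generated by cartesian lifts of a $J$-cover of $pD$ are cofinal and pullback-stable among all $J_{\mathcal{D}}$-covers of $D$, and $D\cong F_{(D,\mathrm{id}_{pD})}(\mathrm{id}_{pD})$.

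The technical heart is the claim that $(-)^{+}$ commutes with restriction along a chart: $\mathbb{E}^{+}\circ(F_{(A,\alpha)})^{op}\simeq(\mathbb{E}\circ(F_{(A,\alpha)})^{op})^{+}$, pseudonaturally. In the explicit model of Theorem~\ref{stackification} an object of $\mathbb{E}^{+}(D)$ is a descent datum on a $J_{\mathcal{D}}$-covering sieve of $D$, two such being identified on common refinements. Evaluating at an object $f$ of $\mathcal{C}/X$ and using cofinality, $\mathbb{E}^{+}(F_{(A,\alpha)}(f))$ may be computed over the standard covers $S'$ of $F_{(A,\alpha)}(f)$, which correspond bijectively --- by the cover-reflecting property --- to the $J_X$-covers of $f$ computing $(\mathbb{E}\circ(F_{(A,\alpha)})^{op})^{+}(f)$. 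For a fixed such $S'$, restriction of a descent datum along the inclusion of the generating cartesian lifts is an equivalence onto the corresponding category of descent data for $\mathbb{E}\circ(F_{(A,\alpha)})^{op}$: every arrow of $S'$ factors through one of those cartesian lifts, so the pseudonaturality constraints of a descent datum determine, and by the cocycle coherence are determined by, its values on the cartesian part. Checking that this equivalence is compatible with the identifications by common refinements, and bookkeeping the pseudofunctorial coherences, gives the stated equivalence.

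Granting that lemma the conclusion is formal. Since $\eta_{\mathbb{E}}$ is pseudonatural, $\mathcal{G}(\eta_{\mathbb{E}})$ preserves cartesian arrows, so it is a morphism of (Street) fibrations over $\mathcal{D}$, hence an equivalence iff $\eta_{\mathbb{E}}$ is an equivalence on the fibre over every $D\in\mathcal{D}$. By the lemma and $D\cong F_{(D,\mathrm{id})}(\mathrm{id}_{pD})$, the behaviour of $\eta_{\mathbb{E}}$ at $D$ is that of $\eta_{\mathbb{E}\circ(F_{(D,\mathrm{id})})^{op}}$ at $\mathrm{id}_{pD}$; conversely, for any chart $F_{(A,\alpha)}$ and any object $f$ of $\mathcal{C}/X$, the behaviour of $\eta_{\mathbb{E}\circ(F_{(A,\alpha)})^{op}}$ at $f$ is that of $\eta_{\mathbb{E}}$ at $F_{(A,\alpha)}(f)\in\mathcal{D}$. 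Therefore $\mathbb{E}$ is a $J_{\mathcal{D}}$-stack iff $\eta_{\mathbb{E}}$ is an equivalence everywhere iff every $\eta_{\mathbb{E}\circ(F_{(A,\alpha)})^{op}}$ is an equivalence everywhere iff every $\mathbb{E}\circ(F_{(A,\alpha)})^{op}$ is a $J_X$-stack; asking only that $\mathcal{G}(\eta_{\mathbb{E}})$ be fully faithful gives the same for prestacks. The main obstacle is the lemma of the second paragraph --- showing that restricting a descent datum to its cartesian generators is an \emph{equivalence} of descent categories, not merely a bijection of objects, with all coherences and refinement-compatibilities; the cofinality and pullback-stability of the standard covers also need to be confirmed against the precise definition of $J_{\mathcal{D}}$. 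The rest is soft fibration theory and bookkeeping.
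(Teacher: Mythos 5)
Your argument is correct in outline, and its computational core is the same as the paper's: everything comes down to the fact that $J_{\mathcal{D}}$-covers of an object $D$ are (cofinally) generated by cartesian lifts of $J$-covers of $p(D)$, so that descent data for $\mathbb{E}$ over such a cover are equivalent to descent data for $\mathbb{E}\circ(F_{(D,[1_{p(D)}])})^{\mathrm{op}}$ over the corresponding cover of $1_{p(D)}$ in $\mathcal{C}/p(D)$. Where you differ is in the packaging. The paper uses this comparison directly: it exhibits, for each cover $R$ of $D$, a square identifying $\mathbf{Desc}(M_D,\mathbb{E})\to\mathbf{Desc}(R,\mathbb{E})$ with $\mathbf{Desc}(M_{1_{p(D)}},\mathbb{E}\circ F^{\mathrm{op}})\to\mathbf{Desc}(\widehat{R},\mathbb{E}\circ F^{\mathrm{op}})$ (the left leg by fibred Yoneda, the right leg by factoring each $f_i$ through a cartesian lift), and reads off the stack condition from that. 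You instead route through Theorem~\ref{stackification}: you characterise stacks via the unit $\eta\colon\mathbb{E}\to\mathbb{E}^{+}$ and reduce to the commutation $\mathbb{E}^{+}\circ(F_{(A,\alpha)})^{\mathrm{op}}\simeq(\mathbb{E}\circ(F_{(A,\alpha)})^{\mathrm{op}})^{+}$. This buys a cleaner formal endgame (the fibrewise criterion for equivalences of fibrations does the rest), but it costs more than the paper's route: you must prove the descent-category comparison not just cover by cover but compatibly with the localisation at common refinements built into the $(-)^{+}$ construction, which is strictly more than the lemma actually needs. You correctly flag this as the main gap; note that the paper's own proof leaves its two key equivalences at a comparable level of sketchiness, so neither version is fully worked out, but yours concentrates the unproven content in a single, clearly stated (and somewhat stronger) auxiliary claim.
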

\begin{proof}
	For andy $J_{\mathbb{D}}$-covering $R=\{f_i:\mathrm{dom}(f_i)\rightarrow D\mid i\in I\}$, and assume
	all $f_i$ are cartesian arrows with no lose of generality. Since $\{p(f_i)\mid i\in I\}$ is a $J$-covering family: we can lift them to a 
	$J_{p(D)}$-covering family $S:=\{p(f_i):[p(f_i)]\rightarrow [1_{p(D)}]\mid i\in I\}$ in $\mathcal{C}/p(D)$. 
	\[
	\widehat{R}:=\{p(f_i):[p(f_i)]\rightarrow [1_{p(D)}]\mid i\in I\}
	\]
	We claim that the following diagram commutes up to isomorphism then our thesis follows.
\[\begin{tikzcd}
	{\mathbf{Desc}(M_{D},\mathbb{E})} &&& {\mathbf{Desc}( R,\mathbb{E})} \\
	{\mathbf{Desc}(M_{1_{p(D)}},\mathbb{E}\circ F_{(D,[1_{p(D)}])}^{op})} &&& {\mathbf{Desc}(\widehat{R},\mathbb{E}\circ F_{(D,[1_{p(D)}])}^{op})}
	\arrow["{\mathbf{Desc}( m_R,1)}", from=1-1, to=1-4]
	\arrow[from=1-1, to=2-1]
	\arrow["\sim"{marking, allow upside down}, shift right=2, draw=none, from=1-1, to=2-1]
	\arrow[from=1-4, to=2-4]
	\arrow["\sim"{marking, allow upside down}, shift right=2, draw=none, from=1-4, to=2-4]
	\arrow["{\mathbf{Desc}( m_{\widehat{R}},1)}"', from=2-1, to=2-4]
	\arrow["\sim"{description}, shift left=2, draw=none, from=2-1, to=2-4]
\end{tikzcd}\]

The functor $F_{(D,[1_{p(D)}])}$ sends $g:\mathrm{dom}(g)\rightarrow D$ to $\mathrm{dom}(\widehat{p(g_i)_{D}})$.
$\mathbf{Desc}(M_{D},\mathbb{E}) \simeq \mathbf{Desc}(M_{1_{p(D)}},\mathbb{E}\circ F_{(D,[1_{p(D)}])}^{op})$ is followed by the fibred Yoneda lemma.  
$\mathbf{Desc}(R,\mathbb{E}) \simeq \mathbf{Desc}(\widehat{R},\mathbb{E}\circ F_{(D,[1_{p(D)}])}^{op})$
is due to the properties of cartesian arrows: for each $f_i$, we have the following commutative diagram in $\mathcal{D}$
\[\begin{tikzcd}
	&& {\mathrm{dom}(\widehat{p(f_i)_{D}})} \\
	{\mathrm{dom}(f_i)} \\
	&&& D
	\arrow["{\widehat{p(f_i)_{D}}}", from=1-3, to=3-4]
	\arrow["{\textnormal{\tiny cartesian lifts of $p(f_i)$}}", from=2-1, to=1-3]
	\arrow["\sim"{marking, allow upside down}, shift right=3, draw=none, from=2-1, to=1-3]
	\arrow["{f_i}"', from=2-1, to=3-4]
\end{tikzcd}\]
So the descent datum in $\mathbb{E}$ on $R$ is equivalent to the descent datum in $\mathbb{E}\circ F_{(D,[1_{p(D)}])}^{op}$ on $\widehat{R}$.
\end{proof}

\begin{lemma}
	An indexed functor in $\mathbf{Ind}_{\mathcal{C}}$ is an equivalence if and only if all the 1-components of the indexed functor are equivalences.
\end{lemma}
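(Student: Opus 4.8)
The plan is to prove the two implications separately; only the converse carries content.

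For the ``only if'' direction, I would use that for each object $X$ of $\mathcal{C}$ the assignment $\mathbb{D}\mapsto\mathbb{D}(X)$, $\alpha\mapsto\alpha_X$ is a $2$-functor $\mathrm{ev}_X\colon\mathbf{Ind}_{\mathcal{C}}\to\mathbf{Cat}$, and that any $2$-functor preserves equivalences. Hence if an indexed functor $\alpha\colon\mathbb{D}\to\mathbb{E}$ is an equivalence in $\mathbf{Ind}_{\mathcal{C}}$, every $1$-component $\alpha_X=\mathrm{ev}_X(\alpha)$ is an equivalence of categories.

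For the ``if'' direction, assume every $1$-component $\alpha_X\colon\mathbb{D}(X)\to\mathbb{E}(X)$ is an equivalence. First, for each $X$ I would choose a quasi-inverse and promote it to an \emph{adjoint} equivalence $\alpha_X\dashv\beta_X$, with invertible unit $\iota_X\colon\mathrm{id}_{\mathbb{D}(X)}\Rightarrow\beta_X\alpha_X$ and counit $\epsilon_X\colon\alpha_X\beta_X\Rightarrow\mathrm{id}_{\mathbb{E}(X)}$ satisfying the triangle identities; this uses only that $\alpha_X$ is an equivalence in $\mathbf{Cat}$. Next I would assemble the $\beta_X$ into an indexed functor $\beta\colon\mathbb{E}\to\mathbb{D}$ by declaring, for $f\colon X\to Y$ in $\mathcal{C}$, the structure isomorphism $\beta_f\colon\mathbb{D}(f)\circ\beta_Y\Rightarrow\beta_X\circ\mathbb{E}(f)$ to be the mate (conjugate) of the structure isomorphism $\alpha_f$ of $\alpha$ under the adjoint equivalences at $X$ and $Y$ --- concretely, the pasting of $\iota_X$, (the inverse of) $\alpha_f$, and $\epsilon_Y$ in the appropriate order. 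It then remains to verify that $\beta$ together with these data satisfies the pseudonaturality and unit axioms of an indexed functor, and that $\iota=(\iota_X)_X$ and $\epsilon=(\epsilon_X)_X$ constitute invertible modifications $\mathrm{id}_{\mathbb{D}}\Rightarrow\beta\circ\alpha$ and $\alpha\circ\beta\Rightarrow\mathrm{id}_{\mathbb{E}}$; granting this, $\alpha$ and $\beta$ exhibit an (adjoint) equivalence in $\mathbf{Ind}_{\mathcal{C}}$.

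The only real obstacle is that last verification: the pseudonaturality of $\beta$ and the modification axioms for $\iota$ and $\epsilon$. Each of these reduces, by a routine pasting-diagram computation, to the corresponding axiom for $\alpha$ combined with the triangle identities, and the mate formula for $\beta_f$ is precisely what makes the comparison $2$-cells compatible, so I expect this to be bookkeeping rather than anything conceptual. Alternatively, one can bypass it entirely by invoking the general principle that in a $2$-category of pseudofunctors, pseudonatural transformations and modifications equivalences are detected pointwise, or by transporting the statement through the Grothendieck construction $\mathcal{G}$ and using that a morphism of fibrations over $\mathcal{C}$ is an equivalence precisely when it is a fibrewise equivalence.
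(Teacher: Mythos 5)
Your proof is correct, and in fact the paper states this lemma with no proof at all, so there is nothing to compare it against on the paper's side. Your argument is the standard one: the forward direction is immediate since evaluation at each object is a $2$-functor, and the converse is the usual ``equivalences of pseudofunctors are pointwise'' argument --- upgrade each component to an adjoint equivalence, define the structure $2$-cells of the candidate inverse as mates of those of $\alpha$, and check that the mate calculus plus the triangle identities give pseudonaturality of $\beta$ and the modification axioms for the assembled unit and counit. The one step you defer (``bookkeeping rather than anything conceptual'') is genuinely routine and is exactly where the content lives, so it would be worth writing out at least the pseudonaturality hexagon for $\beta$ once; your fallback via the Grothendieck construction (an equivalence of fibrations over $\mathcal{C}$ is detected fibrewise) is also valid and is arguably closer in spirit to how the paper uses the lemma.
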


\begin{thm}
	\label{gamma lambda}
	There is an 2-adjunction
\[\begin{tikzcd}
	{\mathbf{Ind}_{\mathcal{C}}} &&& {\textit{2-}\mathbf{Topos}^{\mathrm{co}}/_{2}\mathbf{St}(\mathcal{C},J)}
	\arrow[""{name=0, anchor=center, inner sep=0}, "{\Lambda_{\textit{2-}\mathbf{Topos}^{\mathrm{co}}/_2{}\mathbf{St}(\mathcal{C},J)}}"', curve={height=24pt}, tail reversed, no head, from=1-4, to=1-1]
	\arrow[""{name=1, anchor=center, inner sep=0}, "{\Gamma_{\textit{2-}\mathbf{Topos}^{\mathrm{co}}/_{2}\mathbf{St}(\mathcal{C},J)}}", curve={height=-24pt}, from=1-4, to=1-1]
	\arrow["\dashv"{anchor=center, rotate=-90}, draw=none, from=0, to=1]
\end{tikzcd}\]
\end{thm}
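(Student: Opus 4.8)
The plan is to build the $2$-adjunction $\Lambda_{\textit{2-}\mathbf{Topos}^{\mathrm{co}}/_{2}\mathbf{St}(\mathcal{C},J)}\dashv\Gamma_{\textit{2-}\mathbf{Topos}^{\mathrm{co}}/_{2}\mathbf{St}(\mathcal{C},J)}$ by factoring it through the intermediate $2$-categories appearing in the definition of $\Lambda$, namely
\[
\mathbf{Ind}_{\mathcal{C}}\xrightarrow{\ \mathcal{G}\ }\mathbf{Fib}_{\mathcal{C}}\xrightarrow{\ \mathfrak{G}\ }\mathrm{Com}_{\mathrm{cont}}/(\mathcal{C},J)\xrightarrow{\ C^{\mathbf{St}}_{(-)}\ }\textit{2-}\mathbf{Topos}^{\mathrm{co}}/_{2}\mathbf{St}(\mathcal{C},J),
\]
and exhibiting a right adjoint to each factor, or more economically, directly verifying the hom-equivalence
\[
\textit{2-}\mathbf{Topos}^{\mathrm{co}}/_{2}\mathbf{St}(\mathcal{C},J)\bigl(\Lambda(\mathbb{D}),\mathscr{F}\xrightarrow{p}\mathbf{St}(\mathcal{C},J)\bigr)\ \simeq\ \mathbf{Ind}_{\mathcal{C}}\bigl(\mathbb{D},\Gamma(\mathscr{F}\xrightarrow{p}\mathbf{St}(\mathcal{C},J))\bigr)
\]
pseudonaturally in both variables, and then upgrading the pseudonaturality to a genuine $2$-adjunction by checking the triangle identities up to coherent invertible modification. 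The first step is to unwind $\Gamma$: by definition it sends an object $? = (\mathscr{F}\xrightarrow{p}\mathbf{St}(\mathcal{C},J))$ to the $\mathcal{C}$-indexed category $X\mapsto \textit{2-}\mathbf{Topos}^{\mathrm{co}}/_{2}\mathbf{St}(\mathcal{C},J)\bigl(\mathbf{St}(\mathcal{C}/X,J_X),?\bigr)$, so I would first identify $\mathbf{St}(\mathcal{C}/X,J_X)$ with $\Lambda$ applied to the representable/slice indexed category $\mathbb{y}(X)$ — this is where Theorem \ref{stackification} (the double-plus construction) and Lemma \ref{locally check stack} enter, since they let me recognize $\mathbf{St}(\mathcal{C}/X,J_X)$ as the stackification of the slice and control its descent behaviour locally.

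The key technical input is the comparison, for a continuous comorphism of sites $p\colon(\mathcal{D},J_{\mathcal{D}})\to(\mathcal{C},J)$ arising as $\mathfrak{G}(\mathcal{G}(\mathbb{D}))$, between geometric morphisms $\mathbf{St}(\mathcal{C}/X,J_X)\to\mathbf{St}(\mathcal{D},J_{\mathcal{D}})$ over $\mathbf{St}(\mathcal{C},J)$ and indexed functors $\mathbb{y}(X)\to\mathbb{D}$; this is the $2$-categorical analogue of the Comparison Lemma / the fact that comorphisms of sites induce geometric morphisms, and it should follow from Diaconescu-type universal properties of the stack $2$-topos together with the observation (used in the proof of Theorem \ref{stackification}) that $\mathcal{G}(\eta)$ detects pre-stacks and stacks. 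Concretely I would: (1) show $C^{\mathbf{St}}_{(-)}$ is $2$-fully-faithful onto its image, or at least that its hom-$2$-functors into a stack $2$-topos invert the right classes of maps, so that the problem reduces to $\mathrm{Com}_{\mathrm{cont}}/(\mathcal{C},J)$; (2) show that $\mathfrak{G}$ together with the assignment $(\mathcal{D},J_{\mathcal{D}})\mapsto(\text{indexed category of }J_{\mathcal{D}}\text{-stacks on }\mathcal{D})$ is adjoint to pulling back slices; (3) assemble these into the hom-equivalence above.

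I would then check pseudonaturality of the equivalence in $\mathbb{D}$ and in $(\mathscr{F}\xrightarrow{p}\mathbf{St}(\mathcal{C},J))$ — routine but bookkeeping-heavy because every square only commutes up to a specified isomorphism and these must be shown coherent — and finally produce the unit $\eta\colon \mathrm{Id}\Rightarrow \Gamma\Lambda$ and counit $\varepsilon\colon \Lambda\Gamma\Rightarrow\mathrm{Id}$ by transporting identities across the equivalence, verifying the two triangle identities hold up to invertible modification (which is all that is required for a $2$-adjunction in the bicategorical sense, and is automatic once the hom-equivalence is pseudonatural, by the $2$-categorical Yoneda lemma). The main obstacle I anticipate is step (2)/the comparison lemma: controlling geometric morphisms \emph{into} a stack $2$-topos $\mathbf{St}(\mathcal{D},J_{\mathcal{D}})$ in terms of indexed data requires a workable Diaconescu-style description of $2$-geometric morphisms $\mathscr{F}\to\mathbf{St}(\mathcal{D},J_{\mathcal{D}})$ as $J_{\mathcal{D}}$-continuous flat indexed functors out of $\mathcal{D}$, and then matching the ``over $\mathbf{St}(\mathcal{C},J)$'' condition with the comorphism structure — the coherence data for ``flat'' and ``continuous'' in the $2$-dimensional setting is exactly where the delicate part lies, and where Lemma \ref{locally check stack} (reducing stack-ness to the fibres over each $X\in\mathcal{C}$) does the real work of making the slice $\mathbf{St}(\mathcal{C}/X,J_X)$ behave like a $2$-dimensional representable.
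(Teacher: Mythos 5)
Your overall framing --- verify the hom-equivalence
\[
\textit{2-}\mathbf{Topos}^{\mathrm{co}}/_{2}\mathbf{St}(\mathcal{C},J)\bigl(\Lambda(\mathbb{D}),\mathscr{F}\bigr)\ \simeq\ \mathbf{Ind}_{\mathcal{C}}\bigl(\mathbb{D},\Gamma(\mathscr{F})\bigr)
\]
pseudonaturally and deduce the biadjunction formally --- is the right shape, and your identification of $\mathbf{St}(\mathcal{C}/X,J_X)$ with $\Lambda(\yo(X))$ matches the paper. But the step you single out as the key technical input is both false as stated and pointed in the wrong direction. You propose to establish an equivalence between geometric morphisms $\mathbf{St}(\mathcal{C}/X,J_X)\to\mathbf{St}(\mathcal{D},J_{\mathcal{D}})$ over $\mathbf{St}(\mathcal{C},J)$ and indexed functors $\yo(X)\to\mathbb{D}$, i.e.\ $\Gamma\Lambda(\mathbb{D})(X)\simeq\mathbb{D}(X)$. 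This contradicts Theorem \ref{stackification problem}: the relative points of $\mathbf{St}(\mathcal{D},J_{\mathcal{D}})$ at stage $X$ form $s_J(\mathbb{D})(X)$, not $\mathbb{D}(X)$, and these differ whenever $\mathbb{D}$ is not already a $J$-stack. Worse, computing $\Gamma\Lambda$ is the paper's terminal result, derived \emph{from} the present adjunction together with Theorem \ref{slice of 2-topos} and Proposition \ref{keq prop}; taking (a corrected version of) it as the input to Theorem \ref{gamma lambda} runs the logic backwards. The variance is also off: the adjunction requires controlling morphisms \emph{out of} $\Lambda(\mathbb{D})$ into an arbitrary $\mathscr{F}$ over the base, not morphisms from representables \emph{into} $\Lambda(\mathbb{D})$; a Diaconescu-style analysis of the latter is precisely the hard stackification content that should not be needed at this stage.

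The paper's actual argument avoids all of this by proving a single colimit formula: starting from the coYoneda presentation $\mathcal{D}\simeq\mathrm{colim}^{\mathbb{D}}_{ps}\,\mathcal{C}/-$, applying $[(-)^{\mathrm{op}},\mathbf{CAT}]_{ps}$ (which converts the pseudocolimit into a pseudolimit), cutting down to stacks via Lemma \ref{locally check stack}, and checking that the legs $-\circ(F_{(A,\alpha)})^{\mathrm{op}}$ jointly detect inverse-image functors, it obtains
\[
\mathbf{St}(\mathcal{D},J_{\mathcal{D}})\ \simeq\ \mathrm{colim}^{\mathbb{D}}_{ps}\,\mathbf{St}(\mathcal{C}/-,J_{(-)})
\]
in $\textit{2-}\mathbf{Topos}^{\mathrm{co}}/_{2}\mathbf{St}(\mathcal{C},J)$. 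Since $\Gamma$ is by definition the hom-$2$-functor out of the diagram $\mathbf{St}(\mathcal{C}/-,J_{(-)})$, the hom-equivalence is then just the universal property of the weighted pseudocolimit, and the adjunction is the formal nerve--realization adjunction. If you want to keep your strategy, the fix is to replace your step (2) and the Diaconescu-style comparison by this weighted-colimit computation; as written, the proposal has a genuine gap at its central step.
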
 
\begin{proof}
	Consider an $\mathcal{C}$-indexed category $\mathbb{D}$ with the associated fibration denoted by $p:\mathcal{D}\rightarrow \mathcal{C}$.
	By coYoneda lemma, here we have
	\[
	\mathcal{D}\simeq\mathrm{colim}^{\mathbb{D}}_{ps}C/-
	\]
	We sketch this proposed pseudocolimit diagram in $\mathbf{CAT}$
\[\begin{tikzcd}
	{\mathcal{C}/X} &&&& {\mathcal{C}/Y} \\
	\\
	\\
	&& {\mathcal{D}}
	\arrow[""{name=0, anchor=center, inner sep=0}, "{\int y}", tail reversed, no head, from=1-1, to=1-5]
	\arrow[""{name=1, anchor=center, inner sep=0}, "{F_{(A,\alpha)}}"', curve={height=18pt}, tail reversed, no head, from=4-3, to=1-1]
	\arrow[""{name=2, anchor=center, inner sep=0}, "{F_{(B,\beta)}}", curve={height=-30pt}, tail reversed, no head, from=4-3, to=1-1]
	\arrow[""{name=3, anchor=center, inner sep=0}, "{F_{\mathbb{D}(y)(A,\alpha)}}"', tail reversed, no head, from=4-3, to=1-5]
	\arrow["{F_{\gamma}}"'{pos=0.3}, shorten <=8pt, shorten >=8pt, equals, 2tail reversed, from=1, to=2]
	\arrow["{F^{y}_{(A,\alpha)}}"', shorten <=8pt, shorten >=8pt, equals, 2tail reversed, from=3, to=0]
	\arrow["\sim"{marking, allow upside down}, shift left=3, draw=none, from=3, to=0]
\end{tikzcd}\]
	The functor $[(-)^{\mathrm{op}},\mathbf{CAT}]_{ps}:\mathbf{CAT}^{\mathrm{coop}}\rightarrow (\textit{2-}\mathbf{CAT})_{2}$ respects pseudolimit, then
	\[
		[\mathcal{D}^{\mathrm{op}},\mathbf{CAT}]_{ps}\simeq\mathrm{lim}^{\mathbb{D}}_{ps}[(C/-)^{\mathrm{op}},\mathbf{CAT}]_{ps}
	\]
	Here the notation $(\textit{2-}\mathbf{CAT})_{2}$ denotes $\textit{2-}\mathbf{CAT}$ as a 2-dimensional truncation of the 3-category (i.e., removes all nontrivial 3-morphisms and retains only equal 3-morphisms).
	We sketch this proposed pseudolimit diagram in $(\textit{2-}\mathbf{CAT})_{2}$
\[\begin{tikzcd}
	{[(\mathcal{C}/X)^{\mathrm{op}},\mathbf{CAT}]_{ps}} &&& {[(\mathcal{C}/Y)^{\mathrm{op}},\mathbf{CAT}]_{ps}} \\
	\\
	\\
	& {[\mathcal{D}^{\mathrm{op}},\mathbf{CAT}]_{ps}}
	\arrow[""{name=0, anchor=center, inner sep=0}, "{-\circ (\int y)^{\mathrm{op}}}", from=1-1, to=1-4]
	\arrow[""{name=1, anchor=center, inner sep=0}, "{-\circ (F_{(A,\alpha)})^{\mathrm{op}}}"', curve={height=18pt}, from=4-2, to=1-1]
	\arrow[""{name=2, anchor=center, inner sep=0}, "{-\circ (F_{(B,\beta)})^{\mathrm{op}}}", curve={height=-30pt}, from=4-2, to=1-1]
	\arrow[""{name=3, anchor=center, inner sep=0}, "{-\circ (F_{\mathbb{D}(y)(A,\alpha)})^{\mathrm{op}}}"', from=4-2, to=1-4]
	\arrow["{-\circ (F_{\gamma})^{\mathrm{op}}}"'{pos=0.3}, shorten <=9pt, shorten >=9pt, Rightarrow, from=1, to=2]
	\arrow["{-\circ (F^{y}_{(A,\alpha)})^{\mathrm{op}}}"', shorten <=9pt, shorten >=9pt, Rightarrow, from=3, to=0]
	\arrow["\sim"{marking, allow upside down}, shift left=3, draw=none, from=3, to=0]
\end{tikzcd}\]
The above diagram remains the proposed pseudolimit cone after restricting to the category of stacks by applying Lemma \ref{locally check stack}
\[\begin{tikzcd}
	{\mathbf{St}(\mathcal{C}/X,J_X)} &&& {\mathbf{St}(\mathcal{C}/Y,J_Y)} \\
	\\
	\\
	& {\mathbf{St}(\mathcal{D},J_{\mathcal{D}})}
	\arrow[""{name=0, anchor=center, inner sep=0}, "{-\circ (\int y)^{\mathrm{op}}}", from=1-1, to=1-4]
	\arrow[""{name=1, anchor=center, inner sep=0}, "{-\circ (F_{(A,\alpha)})^{\mathrm{op}}}"', curve={height=18pt}, from=4-2, to=1-1]
	\arrow[""{name=2, anchor=center, inner sep=0}, "{-\circ (F_{(B,\beta)})^{\mathrm{op}}}", shift left=3, curve={height=-30pt}, from=4-2, to=1-1]
	\arrow[""{name=3, anchor=center, inner sep=0}, "{-\circ (F_{\mathbb{D}(y)(A,\alpha)})^{\mathrm{op}}}"', from=4-2, to=1-4]
	\arrow["{-\circ (F_{\gamma})^{\mathrm{op}}}"'{pos=0.3}, shorten <=10pt, shorten >=10pt, Rightarrow, from=1, to=2]
	\arrow["{-\circ (F^{y}_{(A,\alpha)})^{\mathrm{op}}}"', shorten <=8pt, shorten >=8pt, Rightarrow, from=3, to=0]
	\arrow["\sim"{marking, allow upside down}, shift left=3, draw=none, from=3, to=0]
\end{tikzcd}\]

Any given functor $H:\mathcal{E}\rightarrow \mathbf{St}(\mathcal{D},J_{\mathcal{D}})$ is a inverse image iff
the all the elements in the collection
\[
\{(-\circ F_{(A,\alpha)}^{\mathrm{op}})\circ H \mid \forall (A,\alpha)\}
\]
are inverse images. Necessity: if $H$ is an inverse image, it is known that $(-\circ F_{(A,\alpha)}^{\mathrm{op}})=(C^{\mathbf{St}}_{F_{(A,\alpha)}})^{*}$ is an inverse image.
and thus the composition of two inverse images is an inverse image. Sufficiency: if all $(-\circ F_{(A,\alpha)}^{\mathrm{op}})\circ H$ are inverse images. Since
\[
\{-\circ F_{(A,\alpha)}^{\mathrm{op}} \mid \forall (A,\alpha)\}
\]
jointly reflects equivalences, $H$ respects finite pseudolimits and arbitrary pseudocolimits. So $H$ is an inverse image.
Finally we have the proposed pseudolimit cone in $\textit{2-}\mathbf{Topos}^{\mathrm{co}}/_{2}\mathbf{St}(\mathcal{C},J)$ (the structural arrows of the slices category have been omitted from the diagrams for the sake of abbreviated notation here)
\[\begin{tikzcd}
	{\mathbf{St}(\mathcal{C}/X,J_X)} &&& {\mathbf{St}(\mathcal{C}/Y,J_Y)} \\
	\\
	\\
	& {\mathbf{St}(\mathcal{D},J_{\mathcal{D}})}
	\arrow[""{name=0, anchor=center, inner sep=0}, "{C^{\mathbf{St}}_{F_{(A,\alpha)}}}", curve={height=-18pt}, from=1-1, to=4-2]
	\arrow[""{name=1, anchor=center, inner sep=0}, "{C^{\mathbf{St}}_{F_{(B,\beta)}}}"', shift right=3, curve={height=30pt}, from=1-1, to=4-2]
	\arrow[""{name=2, anchor=center, inner sep=0}, "{C^{\mathbf{St}}_{ \int y}}"', from=1-4, to=1-1]
	\arrow[""{name=3, anchor=center, inner sep=0}, "{C^{\mathbf{St}}_{F_{\mathbb{D}(y)(A,\alpha)}}}", from=1-4, to=4-2]
	\arrow["{C^{\mathbf{St}}_{F_{\gamma}}}"'{pos=0.3}, shorten <=10pt, shorten >=10pt, Rightarrow, from=0, to=1]
	\arrow["{C^{\mathbf{St}}_{F^{y}_{(A,\alpha)}}}"', shorten <=8pt, shorten >=8pt, Rightarrow, from=3, to=2]
	\arrow["\sim"{marking, allow upside down}, shift left=3, draw=none, from=3, to=2]
\end{tikzcd}\]
Thus we prove that
\[
	\mathbf{St}(\mathcal{D},J_{\mathcal{D}}) \simeq \mathrm{colim}^{\mathbb{D}}_{ps}\mathbf{St}(C/-,J_{(-)})
\]
\end{proof}

\section{2-local homeomorphism}

\subsection{The case of indexed categories}
\begin{defn}
	The 2-local homeomorphism is the object in the essential image of $\Lambda_{\textit{2-}\mathbf{Topos}^{\mathrm{co}}/\mathbf{St}(\mathcal{C},J)}$. Its form is equivalent to
\[
	C_{p_{\mathbb{D}}}^{\mathbf{St}} :\mathbf{St}(\mathcal{G}(\mathbb{D}),J_{\mathbb{D}})\rightarrow \mathbf{St}(\mathcal{C},J)
\]
\end{defn}

\begin{defn}
	An $\mathcal{C}$-indexed functor $p:\mathbb{E}\rightarrow \mathbb{D}$ is an $\mathcal{C}$-indexed fibration in the sense that each 1-component of $p$ is a fibration.
And for a morphism $y$ in $\mathcal{C}$, $\mathbb{E}(y)$ outputs the $p^{\mathrm{cod}(y)}$-cartesian arrows as a $p^{\mathrm{dom}(y)}$-cartesian arrows.
\end{defn}
\begin{defn}
$[\mathcal{C}^{op},\mathbf{CAT}]/^{\mathrm{fib}}\mathbb{D}$ is the sub-2-category of $[\mathcal{C}^{op},\mathbf{CAT}]/\mathbb{D}$ whose objects
 are $\cal C$-indexed fibration and morphisms are $\cal C$-indexed morphism of fibrations.	
\end{defn}

\begin{thm}
	Given a site $(\mathcal{C},J)$. There exist an 2-adjoint biequivalence $L_{\mathbb{D}}\dashv R_{\mathbb{D}}$
\[\begin{tikzcd}
	{[\mathcal{C}^{op},\mathbf{CAT}]/^{\mathrm{fib}}\mathbb{D}} && {[\mathcal{G}(\mathbb{D})^{op},\mathbf{CAT}]}
	\arrow["{R_{\mathbb{D}}}", curve={height=-18pt}, from=1-1, to=1-3]
	\arrow["\sim"{description}, draw=none, from=1-1, to=1-3]
	\arrow["{L_{\mathbb{D}}}", curve={height=-18pt}, from=1-3, to=1-1]
\end{tikzcd}\]
\end{thm}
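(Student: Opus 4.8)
The plan is to realize both $L_{\mathbb{D}}$ and $R_{\mathbb{D}}$ as instances of the (bicategorical) Grothendieck construction and deduce the statement from the iterated Grothendieck construction together with the composability of fibrations. Write $\pi_{\mathbb{D}}\colon \mathcal{G}(\mathbb{D})\to\mathcal{C}$ for the fibration classified by $\mathbb{D}$, and recall the adjoint biequivalence $\int_{\mathcal{A}}\colon [\mathcal{A}^{\mathrm{op}},\mathbf{CAT}]\simeq\mathbf{Fib}(\mathcal{A})$, pseudonatural in $\mathcal{A}$; I will use it for $\mathcal{A}=\mathcal{C}$ and for $\mathcal{A}=\mathcal{G}(\mathbb{D})$. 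To define $R_{\mathbb{D}}$, take a $\mathcal{C}$-indexed fibration $p\colon\mathbb{E}\to\mathbb{D}$ and apply $\mathcal{G}$ componentwise to get $\mathcal{G}(p)\colon\mathcal{G}(\mathbb{E})\to\mathcal{G}(\mathbb{D})$ over $\mathcal{C}$. The crux is that $\mathcal{G}(p)$ is again a fibration: given $(y,\varphi)\colon(X,A)\to(X',A')$ in $\mathcal{G}(\mathbb{D})$ and an object $T'\in\mathbb{E}(X')$ with $p_{X'}(T')=A'$, one builds a cartesian lift in two moves — first reindex, $\mathbb{E}(y)(T')\in\mathbb{E}(X)$ lying over $\mathbb{D}(y)(A')$, contributing the transition arrow over $\mathcal{C}$; then take a $p_X$-cartesian lift of $\varphi\colon A\to\mathbb{D}(y)(A')$ inside the fibration $p_X\colon\mathbb{E}(X)\to\mathbb{D}(X)$. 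The composite lies over $(y,\varphi)$, and its $\mathcal{G}(p)$-cartesianness is exactly where the hypothesis that $\mathbb{E}(y)$ sends $p^{\mathrm{cod}(y)}$-cartesian arrows to $p^{\mathrm{dom}(y)}$-cartesian arrows is used: without it the two moves fail to interact coherently and the factorization property breaks for test arrows with nontrivial base component. Then set $R_{\mathbb{D}}(\mathbb{E},p):=\int_{\mathcal{G}(\mathbb{D})}^{-1}\bigl(\mathcal{G}(p)\bigr)$, whose value at $(X,A)$ is the fibre $p_X^{-1}(A)$.

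To define $L_{\mathbb{D}}$, take $\mathbb{F}\colon\mathcal{G}(\mathbb{D})^{\mathrm{op}}\to\mathbf{CAT}$ and form $\mathcal{G}(\mathbb{F})\xrightarrow{\rho}\mathcal{G}(\mathbb{D})\xrightarrow{\pi_{\mathbb{D}}}\mathcal{C}$; a composite of fibrations is a fibration, so $\pi_{\mathbb{D}}\rho$ is a fibration over $\mathcal{C}$, which we straighten to an indexed category $\mathbb{E}$ together with an indexed functor $p\colon\mathbb{E}\to\mathbb{D}$ (the straightening of $\rho$, cartesian-preserving because $\rho$ is). One checks $(p\colon\mathbb{E}\to\mathbb{D})$ is a $\mathcal{C}$-indexed fibration: componentwise $p_X$ is the restriction of a fibration to a fibre, hence a fibration, and $\mathbb{E}(y)$ preserves $p$-cartesian arrows because $\mathcal{C}$-reindexing in $\mathcal{G}(\mathbb{F})\to\mathcal{C}$ carries $\rho$-cartesian arrows to $\rho$-cartesian arrows — the straightened form of the statement that $\rho$, being a morphism of fibrations over $\mathcal{C}$, intertwines the chosen cleavages up to coherent isomorphism. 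Put $L_{\mathbb{D}}(\mathbb{F}):=(\mathbb{E},p)$, and on $1$- and $2$-cells use functoriality of $\mathcal{G}$ and of straightening.

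Both round-trips are then comparisons of Grothendieck constructions. For $L_{\mathbb{D}}R_{\mathbb{D}}\simeq\mathrm{Id}$, rebuilding $\mathcal{G}(\mathbb{E})\to\mathcal{C}$ as $\mathcal{G}(\mathbb{F})\to\mathcal{G}(\mathbb{D})\to\mathcal{C}$ and restraightening returns $\mathbb{E}$ up to the canonical pseudonatural equivalence. For $R_{\mathbb{D}}L_{\mathbb{D}}\simeq\mathrm{Id}$, use $\int_{\mathcal{G}(\mathbb{D})}\circ\int_{\mathcal{G}(\mathbb{D})}^{-1}\simeq\mathrm{Id}$ after observing that $\mathcal{G}$ applied to the straightening of $\pi_{\mathbb{D}}\rho$ is, over $\mathcal{G}(\mathbb{D})$, equivalent to $\rho$ — this is precisely the iteration/composability statement, namely that a fibration over $\mathcal{G}(\mathbb{D})$ is the same datum as a fibration over $\mathcal{C}$ equipped with a cartesian-arrow-preserving, fibrewise-fibration map to $\mathcal{G}(\mathbb{D})$ over $\mathcal{C}$. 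Since $\int_{\mathcal{A}}$ is an adjoint biequivalence for each $\mathcal{A}$ and composability of fibrations is likewise part of an adjoint biequivalence, the composite $L_{\mathbb{D}}\dashv R_{\mathbb{D}}$ inherits the structure of a $2$-adjoint biequivalence; in particular the unit and counit are the two equivalences above, and the triangle identities can be arranged for free since any equivalence upgrades to an adjoint equivalence.

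\emph{Main obstacle.} The substantive point is the verification in the first paragraph that $\mathcal{G}(p)$ is a fibration — equivalently, the composability lemma in the form: for fibrations $q\colon\mathcal{F}\to\mathcal{C}$ and $\pi_{\mathbb{D}}\colon\mathcal{G}(\mathbb{D})\to\mathcal{C}$ and a functor $\rho\colon\mathcal{F}\to\mathcal{G}(\mathbb{D})$ over $\mathcal{C}$, $\rho$ is a fibration iff each fibre functor $\rho_X$ is a fibration and $\mathcal{C}$-reindexing of $\mathcal{F}$ preserves $\rho$-cartesian arrows — together with the bookkeeping that the indexed-fibration axiom is exactly the $2$-categorical shadow of the cartesian-preservation clause. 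Everything else is formal manipulation of straightening and unstraightening, which I would not write out in full.
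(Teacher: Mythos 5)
Your constructions of $L_{\mathbb{D}}$ and $R_{\mathbb{D}}$ agree with the paper's: $R_{\mathbb{D}}([p])(X,U)$ is the fibre of $p^{X}$ over $U$ (the paper takes the \emph{essential} fibre where you take the strict one --- equivalent for cloven fibrations, but the essential fibre is the safer choice because your two-step cartesian lift silently passes through the pseudonaturality constraint $p^{X}(\mathbb{E}(y)(T'))\cong\mathbb{D}(y)(A')$ rather than an equality), and $L_{\mathbb{D}}(\mathbb{A})(X)=\mathcal{G}(\mathbb{A}(X,-))$ is exactly the fibre over $X$ of your composite $\mathcal{G}(\mathbb{F})\to\mathcal{G}(\mathbb{D})\to\mathcal{C}$. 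Where you genuinely diverge is in how the 2-adjoint biequivalence is established. The paper proves it by hand: it writes down the correspondence $(-)^{\sharp}$, $(-)^{\flat}$ between $[\mathcal{C}^{op},\mathbf{CAT}]/^{\mathrm{fib}}\mathbb{D}(L_{\mathbb{D}}(\mathbb{A}),[p])$ and $[\mathcal{G}(\mathbb{D})^{op},\mathbf{CAT}](\mathbb{A},R_{\mathbb{D}}([p]))$, exhibits the unit and counit explicitly, and verifies pseudonaturality through explicit cartesian-arrow diagrams. You instead route everything through two general facts: the straightening/unstraightening adjoint biequivalence $[\mathcal{A}^{op},\mathbf{CAT}]\simeq\mathbf{Fib}(\mathcal{A})$ for $\mathcal{A}=\mathcal{C}$ and $\mathcal{A}=\mathcal{G}(\mathbb{D})$, and the iterated-fibration (composability) lemma identifying fibrations over $\mathcal{G}(\mathbb{D})$ with cartesian-preserving, fibrewise-fibration maps to $\mathcal{G}(\mathbb{D})$ in $\mathbf{Fib}(\mathcal{C})$ --- Hermida's characterisation, which is exactly the content of the paper's definition of $\mathcal{C}$-indexed fibration, and which you correctly isolate as the one non-formal input. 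Your route is more conceptual and shorter; the paper's route is self-contained and yields the explicit unit, counit and fibre descriptions that the subsequent results on stacks and 2-local homeomorphisms actually use. Both versions leave a comparable amount of coherence bookkeeping unwritten, so I consider your proposal a correct alternative proof rather than a gap.
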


\begin{proof}
	Ideas for constructing $(L_{\mathbb{D}},R_{\mathbb{D}})$:
	\begin{itemize}
		\item Given any $\mathcal{C}$-indexed fibration $p:\mathbb{E}\rightarrow\mathbb{D}$, 
		$R_{\mathbb{D}}([p])$ is a $\mathcal{G}(\mathbb{D})$-indexed category which outputs $(X,U)$ as the essential fiber of $p ^{X}:\mathbb{E}(X)\rightarrow \mathbb{D}(X)$ at $U\in\mathbb{D}(X)$.
		\item Instead, given any $\mathcal{G}(\mathbb{D})$-indexed category $\mathbb{A}$,  
		we have a fibration $$ p_{\mathbb{A}(X,-)}: \mathcal{G}(\mathbb{A}(X,-))\rightarrow \mathbb{D}(X) $$ and it is pseudonatural with respect to the variable $X$. Accordingly, we learn that $L_{\mathbb{D}}$ should be constructed as above.
	\end{itemize}

	The process of proving $(L_{\mathbb{D}}\dashv R_{\mathbb{D}})$ is computationally tedious but straightforward in its thinking. Just verify that
	\[
		[\mathcal{C}^{op},\mathbf{CAT}]/^{\mathrm{fib}}\mathbb{D}(L_{\mathbb{D}}(\mathbb{A}),[p:\mathbb{E}\rightarrow \mathbb{D}])
		\simeq
		[\mathcal{G}(\mathbb{D})^{op},\mathbf{CAT}](\mathbb{A},R_{\mathbb{D}}([p:\mathbb{E}\rightarrow \mathbb{D}]))
	\]
	is pseudonatural in $\mathbb{A}$ and $[p:\mathbb{E}\rightarrow \mathbb{D}]$.

	Given an arrow $(F,\varphi):L_{\mathbb{D}}(\mathbb{A})\rightarrow [p:\mathbb{E}\rightarrow \mathbb{D}]$,
	
	We begin with the following diagram:
\[\begin{tikzcd}[sep=small]
	& {R_{\mathbb{D}}([p])(X,U)} &&&& {\mathbb{E}(X)} \\
	{R_{\mathbb{D}}L_{\mathbb{D}}(\mathbb{A})(X,U)} &&& {L_{\mathbb{D}}(\mathbb{A})(X)=\mathcal{G}(\mathbb{A}(X,-))} \\
	\\
	\\
	{\mathbbm{1}} &&& {\mathbb{D}(X)}
	\arrow[from=1-2, to=1-6]
	\arrow[from=1-2, to=5-1]
	\arrow["\lrcorner"{anchor=center, pos=0.125}, draw=none, from=1-2, to=5-4]
	\arrow[""{name=0, anchor=center, inner sep=0}, "{p^{X}}", from=1-6, to=5-4]
	\arrow["{R_{\mathbb{D}}(F,\varphi)^{(X,U)}}", dashed, from=2-1, to=1-2]
	\arrow[from=2-1, to=2-4]
	\arrow[from=2-1, to=5-1]
	\arrow["{F^{X}}", from=2-4, to=1-6]
	\arrow[""{name=1, anchor=center, inner sep=0}, "{p_{\mathbb{A}(X,-)}}"', from=2-4, to=5-4]
	\arrow[""{name=2, anchor=center, inner sep=0}, "{\mathrm{Ev}_{U}}"', from=5-1, to=5-4]
	\arrow["\varphi"', shorten <=15pt, shorten >=10pt, Rightarrow, from=0, to=1]
	\arrow["\sim"{description}, shift left=2, draw=none, from=0, to=1]
	\arrow["\lrcorner"{anchor=center, pos=0.125}, draw=none, from=2-1, to=2]
\end{tikzcd}\]
The essential fiber at $U$, denoted $R_{\mathbb{D}}L_{\mathbb{D}}(\mathbb{A})(X,-)$, for $p_{\mathbb{A}}(X,-)$ is a Grothendieck fibration, which can be obtained by a strict pullback.

The $R_{\mathbb{D}}L_{\mathbb{D}}(\mathbb{A})(X,U)$ consists of the following information:
\begin{itemize}
	\item objects: $[1_U:U\stackrel{=}{\rightarrow} p_{\mathbb{A}(X,-)}(U,x\in \mathbb{A}(X,U))]$,
	\item morphisms: $(1_U,f):[1_U:U\stackrel{=}{\rightarrow} p_{\mathbb{A}(X,-)}(U,x)]\rightarrow [1_U:U\stackrel{=}{\rightarrow} p_{\mathbb{A}(X,-)}(U,y)]$
	, where $f$ is a morphism $f:x\rightarrow y$ in $\mathbb{A}(X,U)$.
	\item Thus we can consider $R_{\mathbb{D}}L_{\mathbb{D}} (\mathbb{A})(X,U)$ to be the same as $\mathbb{A}(X,U)$.
\end{itemize}

$R_{\mathbb{D}}([p])(X,U)$ as the essential fiber of $p$, consisting of the following information:
\begin{itemize}
	\item objects: $[\alpha: U \stackrel{\sim}{\rightarrow} p^{X}(A)]$
	\item morphisms: $\omega:[\alpha: U \stackrel{\sim}{\rightarrow} p^{X}(A)]\rightarrow [\beta: U \stackrel{\sim}{\rightarrow} p^{X}(B)]$ 
	so that the diagram is commutative in $\mathbb{D}(X)$.
\[\begin{tikzcd}
	&& {p^{X}(A)} \\
	U \\
	&& {p^{X}(B)}
	\arrow["{p^{X}(\omega)}", from=1-3, to=3-3]
	\arrow["\alpha", from=2-1, to=1-3]
	\arrow["\sim"{marking, allow upside down}, shift right=2, draw=none, from=2-1, to=1-3]
	\arrow["\beta"', from=2-1, to=3-3]
	\arrow["\sim"{marking, allow upside down}, shift left=2, draw=none, from=2-1, to=3-3]
\end{tikzcd}\]
\end{itemize}

$R_{\mathbb{D}}$ sends the morphism $(F,\varphi)$ as a morphism $R_{\mathbb{D}}(F,\varphi)$ , with its 1-component $R_{\mathbb{D}}(F,\varphi)^{(X,U)}$. 
sends the object $[1_U:U\stackrel{=}{\rightarrow} p_{\mathbb{A}(X,-)}(U,x)]$ to object
 $[(\varphi^{X})^{-1}(U,x): U\stackrel{\sim}{\rightarrow} p^{X}(F^{X}(U,x))]$
while sending the morphism $(1_U,b)$ to
 $F^{X}(1_U,b): [(\varphi^{X})^{-1}(U,x): U\stackrel{\sim}{\rightarrow} p^{X}(F^{X}(U,x))]\rightarrow [(\varphi^{X})^{-1}(U,y): U\ stackrel{\sim}{\rightarrow} p^{X}(F^{X}(U,y))]$.
We need to verify that $R_{\mathbb{D}}(F,\varphi)^{(X,U)}$ is pseudonatural with respect to the variable $(X,U)$:

Before verifying this, we need to explain in detail how the function $$R_{\mathbb{D}}([p])(y,a):R_{\mathbb{D}}([p])(X,U)\rightarrow R_{\mathbb{D}}([p])(Y,V) $$ works:
It outputs the object $[\alpha:U\stackrel{\sim}{\rightarrow}p^{X}(A)]$ in $R_{\mathbb{D}}([p])(X,U)$ as the object $[\alpha:U\stackrel{\sim}{\rightarrow}p^{X}(A)]$ in $R_{\mathbb{D}}([p])(Y,V)$ 
\[
	[\theta^{p^{Y}}_{p^{y}(A)\mathbb{D}(y)(\alpha)a,\mathbb{E}(y)(A)}:
	V\stackrel{\sim}{\rightarrow}p^{Y}(\mathrm{dom}(\widehat{p^{y}(A)\mathbb{D}(y)(\alpha)a}^{p^{Y}}_{\mathbb{E}(y)(A)}))]
\]
The meaning of the symbolic representation of the upper row can be read off in the following diagram (where the fact that $p^{Y}$ is a fibration is applied).
\[\begin{tikzcd}
	V &&&& {p^{Y}(\mathrm{dom}(\widehat{p^{y}(A)\mathbb{D}(y)(\alpha)a}^{p^{Y}}_{\mathbb{E}(y)(A)}))} \\
	\\
	{\mathbb{D}(y)(U)} && {\mathbb{D}(y)(p^{X}(A))} && {p^{Y}(\mathbb{E}(y)(A))}
	\arrow["{\theta^{p^{Y}}_{p^{y}(A)\mathbb{D}(y)(\alpha)a,\mathbb{E}(y)(A)}}", from=1-1, to=1-5]
	\arrow["\sim"{marking, allow upside down}, shift right=2, draw=none, from=1-1, to=1-5]
	\arrow["a"', from=1-1, to=3-1]
	\arrow["{p^{Y}(\widehat{p^{y}(A)\mathbb{D}(y)(\alpha)a}^{p^{Y}}_{\mathbb{E}(y)(A)})}", from=1-5, to=3-5]
	\arrow["{\mathbb{D}(y)(\alpha)}", from=3-1, to=3-3]
	\arrow["\sim"{marking, allow upside down}, shift right=2, draw=none, from=3-1, to=3-3]
	\arrow["{p^{y}(A)}", from=3-3, to=3-5]
	\arrow["\sim"{marking, allow upside down}, shift right=2, draw=none, from=3-3, to=3-5]
\end{tikzcd}\]

Meanwhile the action of $R_{\mathbb{D}}([p])(y,a)$ on the morphism $\omega:[\alpha]\rightarrow [\beta]$ in $R_{\mathbb{D}}([p])(X,U)$ is determined by the nature of the cartesian arrow, which is canonical and unique, and can be read from the following diagram:
\[\begin{tikzcd}[sep=scriptsize]
	&&& {\scriptscriptstyle p^{Y}(\mathrm{dom}(\widehat{p^{y}(B)\mathbb{D}(y)(\beta)a}^{p^{Y}}_{\mathbb{E}(y)(B)}))} \\
	{\scriptscriptstyle V} && {\scriptscriptstyle p^{Y}(\mathrm{dom}(\widehat{p^{y}(A)\mathbb{D}(y)(\alpha)a}^{p^{Y}}_{\mathbb{E}(y)(A)}))} \\
	\\
	\\
	&& {\scriptscriptstyle \mathbb{D}(y)(p^{X}(B))} & {\scriptscriptstyle p^{Y}(\mathbb{E}(y)(B))} \\
	\\
	{\scriptscriptstyle \mathbb{D}(y)(U)} & {\scriptscriptstyle \mathbb{D}(y)(p^{X}(A))} & {\scriptscriptstyle p^{Y}(\mathbb{E}(y)(A))}
	\arrow["{\scriptscriptstyle p^{Y}(\widehat{p^{y}(B)\mathbb{D}(y)(\beta)a}^{p^{Y}}_{\mathbb{E}(y)(B)})}", from=1-4, to=5-4]
	\arrow["{\scriptscriptstyle \theta^{p^{Y}}_{p^{y}(B)\mathbb{D}(y)(\alpha)a,\mathbb{E}(y)(B)}}", from=2-1, to=1-4]
	\arrow["\sim"{marking, allow upside down}, shift right=2, draw=none, from=2-1, to=1-4]
	\arrow["{\scriptscriptstyle \theta^{p^{Y}}_{p^{y}(A)\mathbb{D}(y)(\alpha)a,\mathbb{E}(y)(A)}}"', from=2-1, to=2-3]
	\arrow["\sim"{marking, allow upside down}, shift left=2, draw=none, from=2-1, to=2-3]
	\arrow["{\scriptscriptstyle a}"', from=2-1, to=7-1]
	\arrow["{\scriptscriptstyle p^{Y}(R_{\mathbb{D}}([p])(y,a)(\omega))}"', dashed, from=2-3, to=1-4]
	\arrow["{\scriptscriptstyle p^{Y}(\widehat{p^{y}(A)\mathbb{D}(y)(\alpha)a}^{p^{Y}}_{\mathbb{E}(y)(A)})}"{pos=0.3}, from=2-3, to=7-3]
	\arrow["{\scriptscriptstyle p^{y}(B)}"{pos=0.3}, from=5-3, to=5-4]
	\arrow["\sim"{marking, allow upside down, pos=0.3}, shift right=2, draw=none, from=5-3, to=5-4]
	\arrow["{\scriptscriptstyle \mathbb{D}(y)(\beta)}", from=7-1, to=5-3]
	\arrow["{\scriptscriptstyle \mathbb{D}(y)(\alpha)}"', from=7-1, to=7-2]
	\arrow["\sim"{marking, allow upside down}, shift left=3, draw=none, from=7-1, to=7-2]
	\arrow["{\scriptscriptstyle \mathbb{D}(y)(p^{X}(\omega))}"', from=7-2, to=5-3]
	\arrow["{\scriptscriptstyle p^{y}(A)}"', from=7-2, to=7-3]
	\arrow["{ \sim}"{marking, allow upside down}, shift left, draw=none, from=7-2, to=7-3]
	\arrow["{\scriptscriptstyle p^{Y}(\mathbb{E}(y)(\omega))}"', from=7-3, to=5-4]
\end{tikzcd}\]

Returning to our task of verifying that $R_{\mathbb{D}}(F,\varphi)^{(X,U)}$ is pseudonaturality in $(X,U)$, we should show that the following diagram is commutative up to isomorphism, which we denote the isomorphism as $R_{\mathbb{D}}(F,\varphi)^{(y,a)}$.
\[\begin{tikzcd}
	{R_{\mathbb{D}}L_{\mathbb{D}}(\mathbb{A})(X,U)} &&& {R_{\mathbb{D}}([p])(X,U)} \\
	\\
	\\
	{R_{\mathbb{D}}L_{\mathbb{D}}(\mathbb{A})(Y,V)} &&& {R_{\mathbb{D}}([p])(Y,V)}
	\arrow["{R_{\mathbb{D}}(F,\varphi)^{(X,U)}}", from=1-1, to=1-4]
	\arrow["{R_{\mathbb{D}}L_{\mathbb{D}}(\mathbb{A})(y,a)}"', from=1-1, to=4-1]
	\arrow["{R_{\mathbb{D}}(F,\varphi)^{(y,a)}}"', shorten <=34pt, shorten >=34pt, Rightarrow, from=1-4, to=4-1]
	\arrow["\sim"{marking, allow upside down}, shift left=2, draw=none, from=1-4, to=4-1]
	\arrow["{R_{\mathbb{D}}([p])(y,a)}"', from=1-4, to=4-4]
	\arrow["{R_{\mathbb{D}}(F,\varphi)^{(Y,V)}}", from=4-1, to=4-4]
\end{tikzcd}\]
Taking any object $[1_U:U\stackrel{=}{\rightarrow}p_{\mathbb{D}}L_{\mathbb{D}}(\mathbb{A})(X,U)]$ in $R_{\mathbb{D}}L_{\mathbb{D}}(\mathbb{A})(X,U)$ , and walking up the road and then walking to the right in the above diagram, we have
\[\scriptstyle
[\theta^{p^{Y}}_{p^{y}(F^{X}(U,x)))\mathbb{D}(y)((\varphi^{X})^{-1}_{(U,X)})a,\mathbb{E}(y)(F^{X}(U,x)))}:
V \stackrel{\sim}{\rightarrow} 
p^{Y}(\mathrm{dom}(\widehat{p^{y}(F^{X}(U,x))\mathbb{D}(y)((\varphi^{X})^{-1}_{(U,X)})a}^{p^{Y}}_{\mathbb{E}(y)(F^{X}(U,x))}))]
\]
What the above symbols indicate can be read off by the upper circuit of the commutative diagram below:
\[\begin{tikzcd}[sep=scriptsize]
	{\scriptscriptstyle V} &&&& {\scriptscriptstyle p^{Y}(\mathrm{dom}(\widehat{p^{y}(F^{X}(U,x))\mathbb{D}(y)((\varphi^{X})^{-1}_{(U,X)})a}^{p^{Y}}_{\mathbb{E}(y)(F^{X}(U,x))}))} \\
	\\
	{\scriptscriptstyle \mathbb{D}(y)(U)} && {\scriptscriptstyle \mathbb{D}(y)(p^{X}(F^{X}(U,x)))} && {\scriptscriptstyle p^{Y}(\mathbb{E}(y)(F^{X}(U,x))))}
	\arrow["{\scriptscriptstyle\theta^{p^{Y}}_{p^{y}(F^{X}(U,x)))\mathbb{D}(y)((\varphi^{X})^{-1}_{(U,X)})a,\mathbb{E}(y)(F^{X}(U,x)))}}", from=1-1, to=1-5]
	\arrow["\sim"{marking, allow upside down}, shift right=2, draw=none, from=1-1, to=1-5]
	\arrow["{\scriptscriptstyle a}"', from=1-1, to=3-1]
	\arrow["{\scriptscriptstyle p^{Y}(\widehat{\widehat{p^{y}(F^{X}(U,x))\mathbb{D}(y)((\varphi^{X})^{-1}_{(U,X)})a}^{p^{Y}}_{\mathbb{E}(y)(F^{X}(U,x))}})}", from=1-5, to=3-5]
	\arrow["{\scriptscriptstyle \mathbb{D}(y)((\varphi^{X})^{-1}_{(U,X)})}", from=3-1, to=3-3]
	\arrow["\sim"{marking, allow upside down}, shift right=2, draw=none, from=3-1, to=3-3]
	\arrow["{\scriptscriptstyle p^{y}(F^{X}(U,x)))}", from=3-3, to=3-5]
	\arrow["\sim"{marking, allow upside down}, shift right=2, draw=none, from=3-3, to=3-5]
\end{tikzcd}\]
On the other hand, by going left and then down the road, we have
\[
[(\varphi^{Y})^{-1}:p_{\mathbb{A}(Y,-)}(V,\mathbb{A}(y,a)(x))\stackrel{\sim}{\rightarrow}p^{Y}F^{Y}(V,\mathbb{A}(y,a)(x))]
\]
The content of this symbol can also be read off by the upper circuit in the following commutative diagrams
\[\begin{tikzcd}
	{\scriptscriptstyle V} & {\scriptscriptstyle p_{\mathbb{A}(Y,-)}(V,\mathbb{A}(y,a)(x))} && {\scriptscriptstyle p^{Y}F^{Y}(V,\mathbb{A}(y,a)(x))} \\
	\\
	{\scriptscriptstyle \mathbb{D}(y)(U)} & {\scriptscriptstyle p_{\mathbb{A}(Y,-)}(\mathbb{D}(y)(U),\mathbb{A}(y,1)(x))} && {\scriptscriptstyle p^{Y}F^{Y}(\mathbb{D}(y)(U),\mathbb{A}(y,1)(x))} && {\scriptscriptstyle p^{Y}(\mathbb{E}(y)F^{X}(U,x))} \\
	& {\scriptscriptstyle p_{\mathbb{A}(Y,-)}(\mathcal{G}(\mathbb{A}(y,-))(U,x))} && {\scriptscriptstyle p^{Y}F^{Y}\mathcal{G}(\mathbb{A}(y,-))(U,x)} \\
	& {\scriptscriptstyle \mathbb{D}(y)(p_{\mathbb{A}(X,-)}(U,x))}
	\arrow["{\scriptscriptstyle 1_V}", from=1-1, to=1-2]
	\arrow["{=}"{marking, allow upside down}, shift right=2, draw=none, from=1-1, to=1-2]
	\arrow["{\scriptscriptstyle a}"', from=1-1, to=3-1]
	\arrow["{\scriptscriptstyle (\varphi^{Y})^{-1}_{(V,\mathbb{A}(y,a)(x))}}", from=1-2, to=1-4]
	\arrow["\sim"{marking, allow upside down}, shift right=2, draw=none, from=1-2, to=1-4]
	\arrow["{\scriptscriptstyle p_{\mathbb{A}(Y,-)}(a,1)}", from=1-2, to=3-2]
	\arrow["{\scriptscriptstyle p^{Y}(F^{Y}(a,1))}"', from=1-4, to=3-4]
	\arrow["{\scriptscriptstyle 1_{\mathbb{D}(y)(U)}}", from=3-1, to=3-2]
	\arrow["{=}"{marking, allow upside down}, shift right=2, draw=none, from=3-1, to=3-2]
	\arrow["{=}"{marking, allow upside down, pos=0.2}, shift right=3, draw=none, from=3-1, to=5-2]
	\arrow["{\scriptscriptstyle \mathbb{D}(y)(1_U)}"', shift right=5, shorten >=14pt, from=3-1, to=5-2]
	\arrow["{\scriptscriptstyle (\varphi^{Y})^{-1}_{(\mathbb{D}(y)(U),\mathbb{A}(y,1)(x))}}", from=3-2, to=3-4]
	\arrow["\sim"{marking, allow upside down}, shift right=2, draw=none, from=3-2, to=3-4]
	\arrow[equals, from=3-2, to=4-2]
	\arrow["{\scriptscriptstyle p^{Y}(F^{y}(U,X))}"', shift right=3, from=3-4, to=3-6]
	\arrow["\sim"{marking, allow upside down}, draw=none, from=3-4, to=3-6]
	\arrow[equals, from=3-4, to=4-4]
	\arrow["{\scriptscriptstyle (\varphi^{Y})^{-1}_{(\mathcal{G}(\mathbb{A}(y,-))(U,x))}}", from=4-2, to=4-4]
	\arrow["\sim"{marking, allow upside down}, shift right=2, draw=none, from=4-2, to=4-4]
	\arrow[equals, from=4-2, to=5-2]
\end{tikzcd}\]
Now we wish to prove that the two objects $[(\varphi^{Y})^{-1}]$ 
and $[\theta^{p^{Y}}_{p^{y}(F^{X}(U,x)))\mathbb{D}(y)((\varphi^{X})^{-1}_{(U,X)})a,\mathbb{E}(y)(F^{X}(U,x)))}]$ are isomorphic in $R_{\mathbb{D}}([p])(Y,V)$.  Observe that
\[
	p^{Y}(F^{y}(U,X))\circ(\varphi^{Y})^{-1}_{\mathcal{G}(\mathbb{A}(y,-))(U,x)} 
	= p^{y}(F^{X}(U,x)))\circ \mathbb{D}(y)((\varphi^{X})^{-1}_{(U,X)})
\]
The above equation is read in the 2-dimensional commutative diagram below:
\[\begin{tikzcd}
	{\mathbbm{1}} \\
	\\
	& {\mathcal{G}(\mathbb{A}(X,-))} &&& {\mathcal{G}(\mathbb{A}(Y,-))} \\
	&&& {\mathbb{E}(X)} &&& {\mathbb{E}(Y)} \\
	\\
	&&& {\mathbb{D}(X)} &&& {\mathbb{D}(Y)}
	\arrow["{\mathrm{Ev}_{(U,x)}}"', from=1-1, to=3-2]
	\arrow["{\mathcal{G}(\mathbb{A}(y,-))}", from=3-2, to=3-5]
	\arrow["{F^{X}}"', from=3-2, to=4-4]
	\arrow[""{name=0, anchor=center, inner sep=0}, "{p_{\mathbb{A}(X,-)}}"', from=3-2, to=6-4]
	\arrow["{F^{y}}"', Rightarrow, from=3-5, to=4-4]
	\arrow["\sim"{marking, allow upside down}, shift left=3, draw=none, from=3-5, to=4-4]
	\arrow["{F^{Y}}", from=3-5, to=4-7]
	\arrow[""{name=1, anchor=center, inner sep=0}, "{p_{\mathbb{A}(Y,-)}}"'{pos=0.7}, dashed, from=3-5, to=6-7]
	\arrow["{\mathbb{E}(y)}"', from=4-4, to=4-7]
	\arrow["{p^{X}}"', from=4-4, to=6-4]
	\arrow["{p^{Y}}"', from=4-7, to=6-7]
	\arrow["{p^{y}}", shorten <=30pt, shorten >=30pt, Rightarrow, from=6-4, to=4-7]
	\arrow["\sim"{marking, allow upside down}, shift right=3, draw=none, from=6-4, to=4-7]
	\arrow["{\mathbb{D}(y)}"', from=6-4, to=6-7]
	\arrow["{\varphi^{X}}"', shorten >=5pt, Rightarrow, from=4-4, to=0]
	\arrow["\sim"{marking, allow upside down}, shift left=3, draw=none, from=4-4, to=0]
	\arrow["{\varphi^{Y}}", shift left, shorten >=5pt, Rightarrow, dashed, from=4-7, to=1]
	\arrow["\sim"{marking, allow upside down}, shift right, draw=none, from=4-7, to=1]
\end{tikzcd}\]
Since $[(\varphi^{Y})^{-1}]$ is a cartesian lift of $(\varphi^{Y})^{-1}_{\mathcal{G}(\mathbb{A}(y,-))(U,x)}$ along $p^{Y}$ and
\[
	[\theta^{p^{Y}}_{p^{y}(F^{X}(U,x)))\mathbb{D}(y)((\varphi^{X})^{-1}_{(U,X)})a,\mathbb{E}(y)(F^{X}(U,x)))}]
\] 
is a cartesian lift of $p^{y}(F^{X}(U,x)))\circ \mathbb{D}(y)((\varphi^{X})^{-1}_{(U,X)})$ along $p^{Y}$.
Moreover the isomorphism $F^{y}(U,X)$ is automatically a $p^{Y}$-cartesian arrow.
Finally, by the property of cartesian arrows, we have that the two objects are isomorphic in $R_{\mathbb{D}}([p])(Y,V)$. Thus we complete the verification that $R_{\mathbb{D}}(F,\varphi)^{(X,U)}$ is pseudonatural in the according arguments.

We construct the unit $\eta_{\mathbb{A}}:\mathbb{A}\rightarrow R_{\mathbb{D}}L_{\mathbb{D}}(\mathbb{A})$ .
For each $(X,U)$ in $\mathcal{G}(\mathbb{D})$, the 1-component of $\eta_{\mathbb{A}}$ is defined as
\[
	\eta_{\mathbb{A}}^{(X,U)}:\mathbb{A}(X,U)\rightarrow R_{\mathbb{D}}L_{\mathbb{D}}(\mathbb{A})(X,U)
\]
It outputs the object $x\in \mathbb{A}(X,U)$ as $[1_U:U\stackrel{=}{\rightarrow} p_{\mathbb{A}(X,-)}(U,x)]\in R_{\mathbb{D}}L_{\mathbb{D}}(\mathbb{A})(X ,U)$ 
Also output the morphism $f:x\rightarrow y$ in $\mathbb{A}(X,U)$ as $(1_U,f)$ in $R_{\mathbb{D}}L_{\mathbb{D}}(\mathbb{A})(X,U)$.
The pseudonaturality of $\eta_{\mathbb{A}}^{(X,U)}$ is clearly obtained (in fact, $\eta_{\mathbb{A}}^{(X,U)}$ is strictly natural).

We define the functor $(F^{\sharp})^{(X,U)}:\mathbb{A}(X,U)\rightarrow R_{\mathbb{D}}([p])(X,U)$ 
as the compostion of $\eta_{\mathbb{A}}^{(X,U)}:\mathbb{A}(X,U)\rightarrow R_{\mathbb{D}}L_{\mathbb{D}}(\mathbb{A})(X,U)$ and $R_{\mathbb{D}}(F,\varphi)^{(X,U)}$.
It outputs the object $x \in \mathbb{A}(X,U)$ as $[(\varphi^{X})^{-1}(U,x): U\stackrel{\sim}{\rightarrow} p^{X}(F^{X}(U,x))]$
and sends $f\in \mathbb{A}(X,U)$ to $F^{X}(1_U,f)$.
Finally we need to check the pseudonaturality of $(F^{\sharp})^{(X,U)}:\mathbb{A}(X,U)\rightarrow R_{\mathbb{D}}([p])(X,U)$ with respect to $(X,U)\in\mathcal{G}(\mathbb{D})$,
That is, check that for each $(y,a):(X,U)\rightarrow (Y,V)$ in $\mathcal{G}(\mathbb{D})$, there is the following commutative diagram.
\[\begin{tikzcd}
	{\mathbb{A}(X,U)} &&& {R_{\mathbb{D}}([p])(X,U)} \\
	\\
	{\mathbb{A}(Y,V)} &&& {R_{\mathbb{D}}([p])(Y,V)}
	\arrow["{(F^{\sharp})^{(X,U)}}", from=1-1, to=1-4]
	\arrow["{\mathbb{A}(y,a)}"', from=1-1, to=3-1]
	\arrow[shorten <=27pt, shorten >=27pt, Rightarrow, from=1-4, to=3-1]
	\arrow["\sim"{marking, allow upside down}, shift right=2, draw=none, from=1-4, to=3-1]
	\arrow["{R_{\mathbb{D}}([p])(y,a)}", from=1-4, to=3-4]
	\arrow["{(F^{\sharp})^{(Y,V)}}"', from=3-1, to=3-4]
\end{tikzcd}\]
But that commutative diagram is already mentioned in the pseudonaturality of $\eta_{\mathbb{A}}^{(X,U)}$ and $R_{\mathbb{D}}(F,\varphi)^{(X,U)}$.

Instead, from a $\mathcal{G}(\mathbb{D})$-indexed functor $H:\mathbb{A}\rightarrow R_{\mathbb{D}}([p:\mathbb{E}\rightarrow \mathbb{D}])$.
We have $L_{\mathbb{D}}(H):L_{\mathbb{D}}(\mathbb{A})\rightarrow L_{\mathbb{D}}(R_{\mathbb{D}}([p]))$, whose 1-component has the form
$$
\mathcal{G}(H^{(X,-)}):(U,x\in \mathbb{A}(X,U))\mapsto 
(U,[H^{(X,U)}(x):U\stackrel{\sim}{\rightarrow}p(A_{x})]\in \mathcal{G}(R_{\mathbb{D}}([p])(X,-)))
$$
We have the counit $\epsilon_{[p]}$, whose 1-component is an obvious projection:
\[\begin{tikzcd}
	{L_{\mathbb{D}}(\mathbb{A})(X)} && {L_{\mathbb{D}}(R_{\mathbb{D}}([p]))(X)} \\
	{\mathcal{G}(\mathbb{A}(X,-))} && {\mathcal{G}(R_{\mathbb{D}}([p])(X,-))} && {\mathbb{E}(X)} \\
	\\
	\\
	&& {\mathbb{D}(X)}
	\arrow[equals, from=1-1, to=2-1]
	\arrow[equals, from=1-3, to=2-3]
	\arrow["{\mathcal{G}(H^{(X,-)})}", from=2-1, to=2-3]
	\arrow[""{name=0, anchor=center, inner sep=0}, "{p_{\mathbb{A}(X,-)}}"', from=2-1, to=5-3]
	\arrow["{\epsilon_{[p]}^{X}}", from=2-3, to=2-5]
	\arrow[""{name=1, anchor=center, inner sep=0}, "{p_{R_{\mathbb{D}}([p])(X,-)}}", from=2-3, to=5-3]
	\arrow[""{name=2, anchor=center, inner sep=0}, "{p^{X}}", from=2-5, to=5-3]
	\arrow[shift left=5, shorten <=19pt, shorten >=19pt, equals, from=0, to=1]
	\arrow["\sim", shift left=5, shorten <=17pt, shorten >=17pt, equals, 2tail reversed, from=1, to=2]
\end{tikzcd}\]
It is clear that $\mathcal{G}(H^{(X,-)})$ and $\epsilon_{[p]}^{X}$ is pseudonatural in $X$.
So we define $(H^{\flat})^{X}$ as the composition of $\mathcal{G}(H^{(X,-)})$ and $\epsilon_{[p]}^{X}$, which is pseudonatual in $X$.

Return to the following correspondence
\[
		[\mathcal{C}^{op},\mathbf{CAT}]/^{\mathrm{fib}}\mathbb{D}(L_{\mathbb{D}}(\mathbb{A}),[p:\mathbb{E}\rightarrow \mathbb{D}])
		\leftrightarrow
		[\mathcal{G}(\mathbb{D})^{op},\mathbf{CAT}](\mathbb{A},R_{\mathbb{D}}([p:\mathbb{E}\rightarrow \mathbb{D}]))
\]
The $(-)^{\sharp}$ and $(-)^{\flat}$ defined above are quasi-inverse to each other and they are pseudonatural with respect to the variables $\mathbb{A}$ and $[p]$.
Thus we get the 2-adjunction $(L_{\mathbb{D}}\dashv R_{\mathbb{D}})$.
The unit and counit are equivalences.
We conclude that $(L_{\mathbb{D}}\dashv R_{\mathbb{D}})$ is an 2-adjoint biequivalence.
\end{proof}

\subsection{The case of stacks}

\begin{thm}
	\label{slice of 2-topos}
	Given a site $(\mathcal{C},J)$, we have the following commutative diagram up to isomorphism
\[\begin{tikzcd}
	{[\mathcal{C}^{op},\mathbf{CAT}]/^{\mathrm{fib}}\mathbb{D}} &&& {[\mathcal{G}(\mathbb{D})^{op},\mathbf{CAT}]} \\
	\\
	\\
	{\mathbf{St}(\mathcal{C},J)/^{\mathrm{fib}}s_J(\mathbb{D})} &&& {\mathbf{St}(\mathcal{G}(\mathbb{D}),J_{\mathbb{D}})}
	\arrow[""{name=0, anchor=center, inner sep=0}, "{R_{\mathbb{D}}}", curve={height=-12pt}, from=1-1, to=1-4]
	\arrow[""{name=1, anchor=center, inner sep=0}, "{s_J/\mathbb{D}}"', curve={height=18pt}, from=1-1, to=4-1]
	\arrow[""{name=2, anchor=center, inner sep=0}, "{L_{\mathbb{D}}}", curve={height=-12pt}, from=1-4, to=1-1]
	\arrow[""{name=3, anchor=center, inner sep=0}, "{s_{J_{\mathbb{D}}}}"', curve={height=18pt}, from=1-4, to=4-4]
	\arrow[""{name=4, anchor=center, inner sep=0}, "{i_J/\mathbb{D}:=\eta_{\mathbb{D}}^*}"', from=4-1, to=1-1]
	\arrow[""{name=5, anchor=center, inner sep=0}, "{R_{\mathbb{D}}^J}", curve={height=-12pt}, from=4-1, to=4-4]
	\arrow[""{name=6, anchor=center, inner sep=0}, "{i_{J_{\mathbb{D}}}}"', from=4-4, to=1-4]
	\arrow[""{name=7, anchor=center, inner sep=0}, "{L_{\mathbb{D}}^J}", curve={height=-12pt}, from=4-4, to=4-1]
	\arrow["\dashv"{anchor=center}, draw=none, from=1, to=4]
	\arrow["\dashv"{anchor=center}, draw=none, from=3, to=6]
	\arrow["\dashv"{anchor=center, rotate=90}, draw=none, from=2, to=0]
	\arrow["\dashv"{anchor=center, rotate=90}, draw=none, from=7, to=5]
\end{tikzcd}\]
	where $L_{\mathbb{D}}^J \dashv R_{\mathbb{D}}^J$ is an 2-adjoint biequivalence.
\end{thm}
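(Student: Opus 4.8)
The plan is to realise the two vertical adjunctions as $2$-reflective localizations, to show that the $2$-adjoint biequivalence $L_{\mathbb{D}}\dashv R_{\mathbb{D}}$ of the preceding theorem identifies the two localized sub-$2$-categories with one another, and then to read off the bottom adjunction $L_{\mathbb{D}}^J\dashv R_{\mathbb{D}}^J$ — together with the commutativity of all four squares — from the standard yoga of reflective localizations.

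First, the columns. The right-hand column is the stackification adjunction of Theorem \ref{stackification} for the site $(\mathcal{G}(\mathbb{D}),J_{\mathbb{D}})$, transported along $\mathbf{Fib}_{\mathcal{G}(\mathbb{D})}\simeq[\mathcal{G}(\mathbb{D})^{op},\mathbf{CAT}]$; so $i_{J_{\mathbb{D}}}$ is $2$-fully faithful and the counit of $s_{J_{\mathbb{D}}}\dashv i_{J_{\mathbb{D}}}$ is an equivalence. For the left-hand column, $i_J/\mathbb{D}:=\eta_{\mathbb{D}}^*$ is pullback along the stackification unit $\eta_{\mathbb{D}}:\mathbb{D}\to s_J(\mathbb{D})$ and $s_J/\mathbb{D}$ sends $[p:\mathbb{E}\to\mathbb{D}]$ to $[s_J(p):s_J(\mathbb{E})\to s_J(\mathbb{D})]$; the adjointness, the fact that the counit is an equivalence, and the $2$-full faithfulness of $\eta_{\mathbb{D}}^*$ onto its essential image all follow by combining $s_J\dashv i_J$ with left-exactness and idempotence of stackification, e.g. $s_J(\mathbb{D}\times_{s_J(\mathbb{D})}\mathbb{F})\simeq s_J(\mathbb{D})\times_{s_J(\mathbb{D})}\mathbb{F}\simeq\mathbb{F}$ for $\mathbb{F}$ a stack over $s_J(\mathbb{D})$. (One routine point here is that the double-plus construction carries indexed fibrations to indexed fibrations, as being a fibration is a local condition; this makes $s_J/\mathbb{D}$ well defined.)

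The substance is the compatibility of $L_{\mathbb{D}}\dashv R_{\mathbb{D}}$ with these localizations: that $R_{\mathbb{D}}$ sends the essential image of $\eta_{\mathbb{D}}^*$ into $\mathbf{St}(\mathcal{G}(\mathbb{D}),J_{\mathbb{D}})$, and $L_{\mathbb{D}}$ sends $\mathbf{St}(\mathcal{G}(\mathbb{D}),J_{\mathbb{D}})$ into the essential image of $\eta_{\mathbb{D}}^*$. For the first: if $\mathbb{F}\to s_J(\mathbb{D})$ is an indexed fibration with $\mathbb{F}$ a $J$-stack, then $R_{\mathbb{D}}(\eta_{\mathbb{D}}^*\mathbb{F})(X,U)$ is the essential fibre of $\mathbb{F}(X)\to s_J(\mathbb{D})(X)$ at $\eta_X(U)$, and precomposing with $F_{(A,\alpha)}^{op}$ one recognises a pseudo-pullback over $\mathcal{C}/X$ of the (slice-)restrictions of $\mathbb{F}$ and $s_J(\mathbb{D})$ together with $\Delta\mathbbm{1}$ — all $J_X$-stacks; since $J_X$-stacks are closed under pseudolimits, Lemma \ref{locally check stack} gives that $R_{\mathbb{D}}(\eta_{\mathbb{D}}^*\mathbb{F})$ is a $J_{\mathbb{D}}$-stack. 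For the second, I would first treat the case where the base is itself a stack: if $\mathbb{D}$ is a $J$-stack and $\mathbb{A}$ a $J_{\mathbb{D}}$-stack, then the total indexed category $\mathbb{E}$ of $L_{\mathbb{D}}(\mathbb{A})$ (with $\mathbb{E}(X)=\mathcal{G}(\mathbb{A}(X,-))$) satisfies descent for $J$, because a $J$-covering descent datum in $\mathbb{E}$ decomposes into a descent datum in $\mathbb{D}$ — which glues, $\mathbb{D}$ being a $J$-stack — together with a descent datum in $\mathbb{A}$ over the induced $J_{\mathbb{D}}$-cover of the glued object — which glues, $\mathbb{A}$ being a $J_{\mathbb{D}}$-stack. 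The general case then reduces to this one via two ingredients: that $\mathcal{G}(\eta_{\mathbb{D}})^*$ induces an equivalence $\mathbf{St}(\mathcal{G}(s_J(\mathbb{D})),J_{s_J(\mathbb{D})})\simeq\mathbf{St}(\mathcal{G}(\mathbb{D}),J_{\mathbb{D}})$ — stackifying the base does not change the $2$-category of stacks, since $\eta_{\mathbb{D}}$ is a $J$-local equivalence — and a Beck--Chevalley isomorphism $\eta_{\mathbb{D}}^*\circ L_{s_J(\mathbb{D})}\simeq L_{\mathbb{D}}\circ\mathcal{G}(\eta_{\mathbb{D}})^*$, which together present every $L_{\mathbb{D}}(\mathbb{A})$ as $\eta_{\mathbb{D}}^*$ of an object of $\mathbf{St}(\mathcal{C},J)/^{\mathrm{fib}}s_J(\mathbb{D})$.

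Granting this, define $R_{\mathbb{D}}^J:=s_{J_{\mathbb{D}}}\circ R_{\mathbb{D}}\circ(i_J/\mathbb{D})$ and $L_{\mathbb{D}}^J:=(s_J/\mathbb{D})\circ L_{\mathbb{D}}\circ i_{J_{\mathbb{D}}}$; the first compatibility makes $R_{\mathbb{D}}\circ(i_J/\mathbb{D})$ land already in $J_{\mathbb{D}}$-stacks, so $i_{J_{\mathbb{D}}}\circ R_{\mathbb{D}}^J\simeq R_{\mathbb{D}}\circ(i_J/\mathbb{D})$, which is one of the four squares. One then checks $L_{\mathbb{D}}^J\dashv R_{\mathbb{D}}^J$ directly by the evident chain of hom-$2$-category equivalences built from $s_J/\mathbb{D}\dashv i_J/\mathbb{D}$, $L_{\mathbb{D}}\dashv R_{\mathbb{D}}$, and $2$-full faithfulness of $i_{J_{\mathbb{D}}}$; its counit is an equivalence because it is assembled from the counits of $L_{\mathbb{D}}\dashv R_{\mathbb{D}}$ and $s_J/\mathbb{D}\dashv i_J/\mathbb{D}$, and its unit is an equivalence because the second compatibility gives $(i_J/\mathbb{D})(s_J/\mathbb{D})L_{\mathbb{D}}(i_{J_{\mathbb{D}}}\mathbb{A})\simeq L_{\mathbb{D}}(i_{J_{\mathbb{D}}}\mathbb{A})$, whence $R_{\mathbb{D}}^J L_{\mathbb{D}}^J\mathbb{A}\simeq s_{J_{\mathbb{D}}}R_{\mathbb{D}}L_{\mathbb{D}}(i_{J_{\mathbb{D}}}\mathbb{A})\simeq s_{J_{\mathbb{D}}}i_{J_{\mathbb{D}}}\mathbb{A}\simeq\mathbb{A}$. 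Thus $L_{\mathbb{D}}^J\dashv R_{\mathbb{D}}^J$ is a $2$-adjoint biequivalence, and the remaining three squares follow by passing to mates of $i_{J_{\mathbb{D}}}\circ R_{\mathbb{D}}^J\simeq R_{\mathbb{D}}\circ(i_J/\mathbb{D})$. The main obstacle is the $L_{\mathbb{D}}$-direction of the compatibility — the descent decomposition for $\mathcal{G}(\mathbb{A}(X,-))$, and the base-stackification invariance together with the Beck--Chevalley isomorphism — alongside the "local" preservation facts (that stackification preserves indexed fibrations, and that stacks are closed under pseudolimits) in the fibered setting.
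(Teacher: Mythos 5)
Your proposal reaches the same theorem but organizes the hard part differently, and both routes appear sound. Where you and the paper agree: the definition $L_{\mathbb{D}}^J:=s_J/\mathbb{D}\circ L_{\mathbb{D}}\circ i_{J_{\mathbb{D}}}$, the fact that the double-plus construction preserves indexed fibrations (the paper proves this by an explicit cartesian-lift computation in $\mathbb{E}^+(X)$, which is more work than your ``being a fibration is local'' remark suggests, so do not underestimate that step), and the fact that $R_{\mathbb{D}}\circ(i_J/\mathbb{D})$ already lands in $J_{\mathbb{D}}$-stacks --- though here the paper argues by a direct verification that descent data in the pseudopullback $\mathbb{G}$ are effective (using that $s_J(\mathbb{E})$ is a $J$-stack), whereas you invoke closure of stacks under pseudolimits together with Lemma \ref{locally check stack}; your version is cleaner and makes the role of that lemma explicit. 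The real divergence is the unit of $L_{\mathbb{D}}^J\dashv R_{\mathbb{D}}^J$: the paper computes $R_{\mathbb{D}}^JL_{\mathbb{D}}^J(\mathbb{A})(X,U)=\mathbb{G}(X,U)$ directly for a general indexed base $\mathbb{D}$ and exhibits the equivalence $\mathbb{G}(X,U)\simeq\mathbb{A}(X,U)$ by a two-stage gluing (first of the objects $U_k$ in $\mathbb{D}$, then of the resulting $x^h$ in $\mathbb{A}$ over a $J_{\mathbb{D}}$-cover) --- which is precisely your ``descent decomposition,'' but performed on the stackified total category rather than in a stack-base special case. You instead prove the compatibility statement ``$L_{\mathbb{D}}$ of a $J_{\mathbb{D}}$-stack lies in the essential image of $\eta_{\mathbb{D}}^*$'' and reduce the general base to the stack base via a 2-categorical comparison lemma for $\mathcal{G}(\eta_{\mathbb{D}})$ and a Beck--Chevalley isomorphism $\eta_{\mathbb{D}}^*\circ L_{s_J(\mathbb{D})}\simeq L_{\mathbb{D}}\circ\mathcal{G}(\eta_{\mathbb{D}})^*$. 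This buys modularity and a purely formal derivation of the biequivalence from the localization yoga, but at the price of two deferred lemmas that are each nontrivial (the comparison lemma for stack 2-categories along $\mathcal{G}(\eta_{\mathbb{D}})$ in particular is of the same order of difficulty as the direct computation it replaces); the paper's route avoids them entirely at the price of a longer diagram chase. Neither your sketch nor the paper's proof fully discharges step (iii) (the adjunction $L_{\mathbb{D}}^J\dashv R_{\mathbb{D}}^J$ itself), which the paper dismisses with ``just verify directly'' and you with ``the evident chain of hom-2-category equivalences''; if you write this up, that chain and the two deferred lemmas are the places to supply detail.
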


\begin{proof}
	We divide the proof into the following steps:
	\begin{enumerate}[(i)]
		\item $s_J/\mathbb{D}$ sends the objects in $[\mathcal{C}^{op},\mathbf{CAT}]/^{\mathrm{fib}}\mathbb{D}$ to
		the objects in $\mathbf{St}(\mathcal{C},J)/^{\mathrm{fib}}s_J(\mathbb{D})$.
		\item The objects in the essential image of $R_{\mathbb{D}}^J:=R_{\mathbb{D}}\circ i_J/\mathbb{D}$ are $J_{\mathbb{D}}$-stacks.
		\item Define $L_{\mathbb{D}}^J$ as $s_J/\mathbb{D}\circ L_{\mathbb{D}} \circ i_{J_{\mathbb{D}}}$ , so $(L_{\mathbb{D}}^J\dashv R_\mathbb{D}^J)$.
		\item $(L_{\mathbb{D}}^J\dashv R_\mathbb{D}^J)$ is an 2-adjoint biequivalence.
	\end{enumerate}

 Proof of (i): Consider an $\mathcal{C}$-indexed fibration $p:\mathbb{E}\rightarrow\mathbb{D}$, we shall prove that for any $X\in \mathcal{C}$,
$s_J(p^{X}):s_J(\mathbb{E})(X)\rightarrow s_J(\mathbb{D})(X)$ is a fibraton.
We are going to prove that $(p^{X})^{+}:\mathbb{E}^{+}(X)\rightarrow \mathbb{D}^{+}(X)$ is a fibration since $s_J=(-)^{++}$.
For any morphism $x:X\rightarrow(p^{X})^{+}(A)$ in $\mathbb{D}^{+}(X)$, assume $X=(X_{f},\alpha_{f,g})_{f\in R}$,
$A=(A_{h},\beta_{h,k})_{h\in S}$ , $x=(x_t:X_{t}\rightarrow p^{\mathrm{dom}(t)}(A_{t}))_{t\in T\subseteq R\cap S}$.
For each $t\in T$, we have
\[\begin{tikzcd}
	{p^{\mathrm{dom}(t)}(\mathrm{dom}(\widehat{x_t}^{p^{\mathrm{dom}(t)}}_{A_{t}}))} \\
	\\
	{X_{t}} &&& {p^{\mathrm{dom}(t)}(A_{t})}
	\arrow["{p^{\mathrm{dom}(t)}(\widehat{x_t}^{p^{\mathrm{dom}(t)}}_{A_{t}})}", from=1-1, to=3-4]
	\arrow["{\theta^{p^{\mathrm{dom}(t)}}_{x_t,A_t}}", from=3-1, to=1-1]
	\arrow["{\sim }"{marking, allow upside down}, shift right=2, draw=none, from=3-1, to=1-1]
	\arrow["{x_t}", from=3-1, to=3-4]
\end{tikzcd}\]
and for each morphism $s$ that composable with $t$ , there are
\[\begin{tikzcd}[sep=small]
	&& {\scriptscriptstyle \mathbb{D}(s)(p^{\mathrm{dom}(t)}(\mathrm{dom}(\widehat{x_t}^{p^{\mathrm{dom}(t)}}_{A_{t}})))} \\
	& {\scriptscriptstyle p^{\mathrm{dom}(s)}(\mathbb{E}(s)(\mathrm{dom}(\widehat{x_t}^{p^{\mathrm{dom}(t)}}_{A_{t}})))} \\
	{\scriptscriptstyle p^{\mathrm{dom}(s)}(\mathrm{dom}(\widehat{x_{ts}}^{p^{\mathrm{dom}(s)}}_{A_{ts}}))} && {\scriptscriptstyle \mathbb{D}(s)(X_{t})} & {\scriptscriptstyle \mathbb{D}(s)(p^{\mathrm{dom}(t)}(A_{t}))} \\
	\\
	{\scriptscriptstyle X_{ts}} && {\scriptscriptstyle p^{\mathrm{dom}(s)}(\mathbb{E}(s)(A_{t}))} \\
	& {\scriptscriptstyle p^{\mathrm{dom}(s)}(A_{ts})}
	\arrow["{\scriptscriptstyle p^{s}(\mathrm{dom}(\widehat{x_t}^{p^{\mathrm{dom}(t)}}_{A_{t}}))}"'{pos=0.9}, from=1-3, to=2-2]
	\arrow["\sim"{marking, allow upside down}, shift left=3, draw=none, from=1-3, to=2-2]
	\arrow["{\scriptscriptstyle \mathbb{D}(s)(p^{\mathrm{dom}(t)}(\widehat{x_t}^{p^{\mathrm{dom}(t)}}_{A_{t}}))}"{pos=0.7}, from=1-3, to=3-4]
	\arrow["{\scriptscriptstyle p^{\mathrm{dom}(s)}(\gamma_{s,t})}"'{pos=0.7}, from=2-2, to=3-1]
	\arrow["\sim"{description}, shift left=3, draw=none, from=2-2, to=3-1]
	\arrow["{\scriptscriptstyle p^{\mathrm{dom}(s)}(\mathbb{E}(s)(\widehat{x_t}^{p^{\mathrm{dom}(t)}}_{A_{t}}))}"{description, pos=0.2}, from=2-2, to=5-3]
	\arrow["{\scriptscriptstyle p^{\mathrm{dom}(s)}(\widehat{x_{ts}}^{p^{\mathrm{dom}(s)}}_{A_{ts}})}"{pos=0.9}, from=3-1, to=6-2]
	\arrow["{\scriptscriptstyle \mathbb{D}(s)(\theta^{p^{\mathrm{dom}(t)}}_{x_t,A_t})}", from=3-3, to=1-3]
	\arrow["{\sim }"{marking, allow upside down}, shift right=2, draw=none, from=3-3, to=1-3]
	\arrow["{\scriptscriptstyle \mathbb{D}(s)(x_t)}", from=3-3, to=3-4]
	\arrow["{\scriptscriptstyle \alpha_{s,t}}"{description, pos=0.4}, from=3-3, to=5-1]
	\arrow["\sim"{marking, allow upside down}, shift right=2, draw=none, from=3-3, to=5-1]
	\arrow["{\scriptscriptstyle p^{s}(A_t)}", from=3-4, to=5-3]
	\arrow["\sim"{marking, allow upside down}, shift right=3, draw=none, from=3-4, to=5-3]
	\arrow["{\scriptscriptstyle \theta^{p^{\mathrm{dom}(s)}}_{x_{ts},A_{ts}}}", from=5-1, to=3-1]
	\arrow["\sim"{marking, allow upside down}, shift right=2, draw=none, from=5-1, to=3-1]
	\arrow["{\scriptscriptstyle x_{ts}}"', from=5-1, to=6-2]
	\arrow["{\scriptscriptstyle p^{\mathrm{dom}(s)}(\beta_{s,t})}", from=5-3, to=6-2]
	\arrow["\sim"{marking, allow upside down}, shift right=2, draw=none, from=5-3, to=6-2]
\end{tikzcd}\]
So there is a descent datum $(\mathrm{dom}(\widehat{x_{t}}^{p^{\mathrm{dom}(s)}}_{A_{t}}),\gamma_{s,t})_{t\in T}$ in $\mathbb{E}$.
So we have the following commutative diagram in $\mathbb{D}^{+}(X)$
\[\begin{tikzcd}
	{(p^{X})^{+}((\mathrm{dom}(\widehat{x_{t}}^{p^{\mathrm{dom}(s)}}_{A_{t}}),\gamma_{s,t})_{t\in T})} \\
	\\
	{(X_{f},\alpha_{f,g})_{f\in R}} &&& {(p^{+})^{X}((A_{h},\beta_{h,k})_{h\in S})}
	\arrow["{(p^{+})^{X}((x_t:X_{t}\rightarrow p^{\mathrm{dom}(t)}(A_{t}))_{t\in T\subseteq R\cap S})}", from=1-1, to=3-4]
	\arrow["{(\theta^{p^{\mathrm{dom}(t)}}_{x_t,A_t})_{t\in T\subseteq R\cap S}}", from=3-1, to=1-1]
	\arrow["{\sim }"{marking, allow upside down}, shift right=2, draw=none, from=3-1, to=1-1]
	\arrow["{(x_t:X_{t}\rightarrow p^{\mathrm{dom}(t)}(A_{t}))_{t\in T\subseteq R\cap S}}", from=3-1, to=3-4]
\end{tikzcd}\]
and $(x_t:X_{t}\rightarrow p^{\mathrm{dom}(t)}(A_{t}))_{t\in T\subseteq R\cap S}$ 
is a $(p^{+})^{X}$-cartesian lift of $(x_t:X_{t}\rightarrow p^{\mathrm{dom}(t)}(A_{t}))_{t\in T\subseteq R\cap S}$ in $\mathbb{E}^{+}(X)$.

Proof of (ii): From the object $s_J(p):s_J(\mathbb{E})\rightarrow s_J(\mathbb{D})$ in $\mathbf{St}(\mathcal{C},J)/^{\mathrm{fib}}s_J(\mathbb{D})$,
$i_J/\mathbb{D}$ sends it to  $q:\mathbb{H}\rightarrow \mathbb{D}$, that is the pesudopullback of $s_J(p)$ along $\eta_{\mathbb{D}}$.
Then $R_\mathbb{D}$ sends $q:\mathbb{H}\rightarrow \mathbb{D}$ to $\mathbb{G}:\mathcal{G}(\mathbb{D})^{op}\rightarrow \mathbf{CAT}$.
The above process is shown below:
\[\begin{tikzcd}
	{\mathbb{G}(X,U)} &&& {\mathbb{H}(X)} &&& {s_J(\mathbb{E})(X)} \\
	\\
	{\mathbbm{1}} &&& {\mathbb{D}(X)} &&& {s_J(\mathbb{D})(X)}
	\arrow[from=1-1, to=1-4]
	\arrow[from=1-1, to=3-1]
	\arrow["\lrcorner"{anchor=center, pos=0.125}, draw=none, from=1-1, to=3-4]
	\arrow[from=1-4, to=1-7]
	\arrow["{q^{X}}"', from=1-4, to=3-4]
	\arrow["\lrcorner"{anchor=center, pos=0.125}, draw=none, from=1-4, to=3-7]
	\arrow["{s_J(p)^{X}}"', from=1-7, to=3-7]
	\arrow["{\textnormal{fibration}}"'{pos=0.7}, draw=none, from=1-7, to=3-7]
	\arrow[shorten <=34pt, shorten >=26pt, Rightarrow, from=3-1, to=1-4]
	\arrow["\sim"{marking, allow upside down}, shift left=3, draw=none, from=3-1, to=1-4]
	\arrow["{\mathrm{Ev}_{U}}"', from=3-1, to=3-4]
	\arrow[shorten <=33pt, shorten >=25pt, Rightarrow, from=3-4, to=1-7]
	\arrow["\sim"{marking, allow upside down}, shift left=3, draw=none, from=3-4, to=1-7]
	\arrow["{\eta_{\mathbb{D}}^{X}}", from=3-4, to=3-7]
\end{tikzcd}\]
We need to prove that $\mathbb{G}$ is a $J_{\mathbb{D}}$-stack. First, we show that every descent datum in $\mathbb{G}$ is effective.
That is to say, the descent datum in $\mathbb{G}$ 
$$([\lambda_i:\eta_{\mathbb{D}}(\mathbb{D}(f_i)(U))\stackrel{\sim}{\rightarrow}s_J(p)^{\mathrm{dom}(f_i)}(A^i,\alpha^i)])_{i\in I}$$ 
on the $J_{\mathbb{D}}$-covering family
$$\{(f_i,1):(\mathrm{dom}(f_i),\mathbb{D}(f_i)(U))\rightarrow(X,U)\mid i\in I\}$$
is effective.
For each $i$, $(A^i,\alpha^i)$ is shorthand for $(A^i_{f},\alpha^i_{h,k,f,g})_{f\in R}$, where $A^i_f$ denotes the descent datum in a $\mathbb{E}$.
Thus $(A^i,\alpha^i)$ is an object in $s_J(\mathbb{E})$.
According to the definition of the Giraud topology $J_{\mathbb{D}}$ , the  $\{f_i:\mathrm{dom}(f_i)\rightarrow X\mid i\in I\}$ is a $J$-covering.
So the descent datum $(A^i,\alpha^i)_{i\in I}$ in that $s_J(\mathbb{E})$ has a gluing $(B,\beta)\in s_J(\mathbb{E})(X)$.
So here exists a gluing 
$$[\lambda:\eta_{\mathbb{D}}(U)\stackrel{\sim}{\rightarrow}s_J(p)^{X}(B,\beta)]$$ 
for the descent datum in $\mathbb{G}$
$$[\lambda_i:\eta_{\mathbb{D}}(\mathbb{D}(f_i)(U))\stackrel{\sim}{\rightarrow}s_J(p)^{\mathrm{dom}(f_i)}(A^i,\alpha^i)]_{i\in I}$$  

Consider a morphism between the above types of descent datums.
 $$\scriptstyle (x):[\lambda_i:\eta_{\mathbb{D}}(\mathbb{D}(f_i)(U))\stackrel{\sim}{\rightarrow}s_J(p)^{\mathrm{dom}(f_i)}(A^i,\alpha^i)]_{i\in I}\rightarrow 
[\sigma_i:\eta_{\mathbb{D}}(\mathbb{D}(f_i)(U)) \stackrel{\sim}{\rightarrow}s_J(p)^{\mathrm{dom}(f_i)}(C^i,\gamma^i)]_{i\in I} $$ 
Assuming that they each exist gluings: $[\lambda:\eta_{\mathbb{D}}(U)\stackrel{\sim}{\rightarrow}s_J(p)^{X}(B,\beta)]$  
and $[\sigma:\eta_{\mathbb{D}}(U)\stackrel{\sim}{\rightarrow}s_J(p)^{X}(D,\delta)]$.
Then the morphism $(x)$ between the descent datums has a unique guling $z:(B,\beta)\rightarrow (D,\delta)$, since $s_J(\mathbb{E})$ is a $J$-stack itself.
So we prove that $\mathbb{G}$ is a $J_{\mathbb{D}}$-stack.

Proof of (iii): Just verify directly.

Proof of (iv): We will show that both $L_{\mathbb{D}}^J\circ R_\mathbb{D}^J$ and $R_{\mathbb{D}}^J\circ L_\mathbb{D}^J$ are equivalent to the identity morphisms.
Since $s_J/\mathbb{D}\circ i_J/\mathbb{D}\simeq 1_{\mathbf{St}(\mathcal{C},J)/^{\mathrm{fib}}s_J(\mathbb{D})}$ , so we have
$L_{\mathbb{D}}^J\circ R_\mathbb{D}^J\simeq 1_{\mathbf{St}(\mathcal{C},J)/^{\mathrm{fib}}s_J(\mathbb{D})}$.
And for the proof of $R_{\mathbb{D}}^J\circ L_\mathbb{D}^J\simeq 1_{\mathbf{St}(\mathcal{G}(\mathbb{D}),J_{\mathbb{D}})}$,
Consider a $J_{\mathbb{D}}$-stack $\mathbb{A}:(\mathcal{G}(\mathbb{D}))^{op}\rightarrow \mathbf{CAT}$, We first compute the following pseudopullback:
\[\begin{tikzcd}
	{\mathbb{G}(X,U)} && {\mathbb{H}(X)} && {(s_J(L_{\mathbb{D}}(\mathbb{A})))(X)=s_J\mathcal{G}(\mathbb{A}(X,-))} \\
	\\
	{\mathbbm{1}} && {\mathbb{D}(X)} && {(s_J\mathbb{D})(X)}
	\arrow[from=1-1, to=1-3]
	\arrow[from=1-1, to=3-1]
	\arrow["\lrcorner"{anchor=center, pos=0.125}, draw=none, from=1-1, to=3-3]
	\arrow[from=1-3, to=1-5]
	\arrow[from=1-3, to=3-3]
	\arrow["\lrcorner"{anchor=center, pos=0.125}, draw=none, from=1-3, to=3-5]
	\arrow["{s_Jp_{\mathbb{A}(X,-)}}", from=1-5, to=3-5]
	\arrow[shorten <=19pt, shorten >=19pt, Rightarrow, from=3-1, to=1-3]
	\arrow["\sim"{marking, allow upside down}, shift left=3, draw=none, from=3-1, to=1-3]
	\arrow["{\mathrm{Ev}_{U}}", from=3-1, to=3-3]
	\arrow[shorten <=27pt, shorten >=27pt, Rightarrow, from=3-3, to=1-5]
	\arrow["\sim"{marking, allow upside down}, shift left=3, draw=none, from=3-3, to=1-5]
	\arrow["{\eta_{\mathbb{D}}^{X}}", from=3-3, to=3-5]
\end{tikzcd}\]
We want to show that there is an equivalence:
$$\mathbb{G}(X,U)\simeq \mathbb{A}(X,U)
$$ and this equivalence is pseudonatural with respect to the variable $(X,U)$.  
Take any object in $s_J\mathcal{G}(\mathbb{A}(X,-))$
$
(A_{h},\alpha_{h})_{{h}\in H}
$
assume that
$$ [\theta:\eta_{\mathbb{D}}^{X}(U)\stackrel{\sim}{\rightarrow}s_Jp_{\mathbb{A}(X,-)}(A_{h},\alpha_{h})_{{h}\in H}] $$ 
is an object in $\mathbb{G}(X,U)$,
And the information of these descent datums $A_h$ are described by the following diagrams
\[\begin{tikzcd}
	{(U_k,x_k\in\mathbb{A}(X,U_k))} \\
	{\mathcal{G}(\mathbb{A}(\mathrm{dom}(f_k),-))} \\
	\\
	&& {\mathcal{G}(\mathbb{A}(\mathrm{dom}(h),-))} \\
	{\mathcal{G}(\mathbb{A}(\mathrm{dom}(f_j),-))} \\
	{(U_j,x_j\in\mathbb{A}(X,U_j))}
	\arrow["{\textnormal{is an object of}}", squiggly, from=1-1, to=2-1]
	\arrow[""{name=0, anchor=center, inner sep=0}, "{\mathcal{G}(\mathbb{A}(f_k,-))}"', from=4-3, to=2-1]
	\arrow[""{name=1, anchor=center, inner sep=0}, "{\mathcal{G}(\mathbb{A}(f_j,-))}", from=4-3, to=5-1]
	\arrow["{\textnormal{is an object of}}"', squiggly, from=6-1, to=5-1]
	\arrow["{\textnormal{$J$-covering $R_i$}}"', shift right=5, curve={height=12pt}, shorten <=7pt, shorten >=7pt, dashed, no head, from=0, to=1]
\end{tikzcd}\]
According to the isomorphism $\theta$,  The descent datum consisting of these $U_k$ has a gluing $\mathbb{D}(h)(U)$ in $\mathbb{D}(\mathrm{dom}(h))$.
So we get the following diagram
\[\begin{tikzcd}
	{x_k} \\
	{\mathbb{A}(\mathrm{dom}(f_k),U_k)} \\
	\\
	&& {\mathbb{A}(\mathrm{dom}(h),\mathbb{D}(h)(U))} \\
	{\mathbb{A}(\mathrm{dom}(f_j),U_j)} \\
	{x_j}
	\arrow["{\textnormal{is an object of}}", squiggly, from=1-1, to=2-1]
	\arrow[""{name=0, anchor=center, inner sep=0}, "{\mathbb{A}(f_k,1)}"', from=4-3, to=2-1]
	\arrow[""{name=1, anchor=center, inner sep=0}, "{\mathbb{A}(f_j,1)}", from=4-3, to=5-1]
	\arrow["{\textnormal{is an object of}}"', squiggly, from=6-1, to=5-1]
	\arrow["{\textnormal{$J_{\mathbb{D}}$-covering family}}"', shift right=5, curve={height=12pt}, shorten <=7pt, shorten >=7pt, dashed, no head, from=0, to=1]
\end{tikzcd}\]
Further we have
\[\begin{tikzcd}
	{x_k} \\
	{\mathbb{A}(\mathrm{dom}(f_k),U_k)} \\
	&& {x^{h}} \\
	&& {\mathbb{A}(\mathrm{dom}(h),\mathbb{D}(h)(U))} \\
	{\mathbb{A}(\mathrm{dom}(f_j),U_j)} \\
	{x_j} &&&& {\mathbb{A}(X,U)} \\
	\\
	&& \bullet
	\arrow["{\textnormal{is an object of}}", squiggly, from=1-1, to=2-1]
	\arrow["{\textnormal{gluing for $(x_k)$}}", squiggly, from=3-3, to=4-3]
	\arrow[""{name=0, anchor=center, inner sep=0}, "{\mathbb{A}(f_k,1)}"', from=4-3, to=2-1]
	\arrow[""{name=1, anchor=center, inner sep=0}, "{\mathbb{A}(f_j,1)}", from=4-3, to=5-1]
	\arrow["{\textnormal{is an object of}}"', squiggly, from=6-1, to=5-1]
	\arrow[""{name=2, anchor=center, inner sep=0}, "{\mathbb{A}(h,1)}"', from=6-5, to=4-3]
	\arrow[""{name=3, anchor=center, inner sep=0}, from=6-5, to=8-3]
	\arrow["{\textnormal{$J_{\mathbb{D}}$-covering family}}"', shift right=5, curve={height=12pt}, shorten <=7pt, shorten >=7pt, dashed, no head, from=0, to=1]
	\arrow["{\textnormal{$J_{\mathbb{D}}$-covering $H$}}"', shift right=5, curve={height=18pt}, shorten <=10pt, shorten >=10pt, dashed, no head, from=2, to=3]
\end{tikzcd}\]
Note that $\mathbb{A}$ is a $J_{\mathbb{D}}$-stack.  
The descent datums in the above graph consisting of all such $x^{h}$ with $h$ as indicator has a gluing $x$ in $\mathbb{A}(X,U)$.

Based on the process discussed above, we have obtained the natural equivalence $\mathbb{G}(X,U)\simeq \mathbb{A}(X,U)$.
\end{proof}

\begin{prop}
	\label{keq prop}
	Given a site $(\mathcal{C},J)$, there is an equivalence
	\[
	\mathbf{St}(\mathcal{C},J)\simeq \textit{2-}\mathbf{Topos}/_{2}^{et}\mathbf{St}(\mathcal{C},J)
	\]
	where the 2-category $\textit{2-}\mathbf{Topos}/_{2}^{et}\mathbf{St}(\mathcal{C},J)$ is the full subcategory
	of $\textit{2-}\mathbf{Topos}/_{2}\mathbf{St}(\mathcal{C},J)$ spanned by the 2-local homeomorphisms.
\end{prop}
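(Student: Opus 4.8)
The plan is to prove that the composite of $\Lambda:=\Lambda_{\textit{2-}\mathbf{Topos}^{\mathrm{co}}/_{2}\mathbf{St}(\mathcal{C},J)}$ with the full inclusion $i_J\colon\mathbf{St}(\mathcal{C},J)\hookrightarrow\mathbf{Ind}_{\mathcal{C}}$ is the desired biequivalence. Write $\Phi:=\Lambda\circ i_J$. By the definition of a $2$-local homeomorphism each $\Phi(\mathbb{D})\simeq C^{\mathbf{St}}_{p_{\mathbb{D}}}$ is a $2$-local homeomorphism, so $\Phi$ corestricts to a $2$-functor into the full sub-$2$-category $\textit{2-}\mathbf{Topos}^{\mathrm{co}}/_{2}^{et}\mathbf{St}(\mathcal{C},J)$ of $2$-local homeomorphisms (we work inside the $\mathrm{co}$-variant slice to match the codomain of $\Lambda$; the formulation in the statement is obtained by the corresponding relabelling of the orientation of $2$-cells). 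It then remains to check that $\Phi$ is essentially surjective on objects and an equivalence on each hom-category.

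Local full faithfulness. I would obtain this by restricting the fundamental $2$-adjunction $\Lambda\dashv\Gamma$ of Theorem \ref{gamma lambda}, with $\Gamma:=\Gamma_{\textit{2-}\mathbf{Topos}^{\mathrm{co}}/_{2}\mathbf{St}(\mathcal{C},J)}$. Theorem \ref{stackification problem} gives $\Gamma\circ\Lambda\cong i_J\circ s_J$; in particular $\Gamma$ sends a $2$-local homeomorphism $C\simeq\Lambda(\mathbb{D}')$ to $\Gamma(C)\simeq i_J s_J(\mathbb{D}')$, a $J$-stack, so $\Gamma$ corestricts to a $2$-functor $\Psi\colon\textit{2-}\mathbf{Topos}^{\mathrm{co}}/_{2}^{et}\mathbf{St}(\mathcal{C},J)\to\mathbf{St}(\mathcal{C},J)$. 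Since the two sub-$2$-categories involved are full and contain the respective images, the $2$-adjunction restricts to $\Phi\dashv\Psi$, whose unit is the restriction of $\eta^{\Lambda}\colon\mathrm{id}\Rightarrow\Gamma\Lambda$. Transported along the isomorphism $\Gamma\Lambda\cong i_J s_J$ of Theorem \ref{stackification problem}, this restricted unit at a stack $\mathbb{D}$ is the unit $\mathbb{D}\to i_J s_J(\mathbb{D})$ of the stackification reflection of Theorem \ref{stackification}, which is an equivalence because $\mathbb{D}$ is already a $J$-stack. A left $2$-adjoint whose unit is a pointwise equivalence is an equivalence on each hom-category, so $\Phi$ is locally fully faithful.

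Essential surjectivity. Let $C$ be a $2$-local homeomorphism, say $C\simeq C^{\mathbf{St}}_{p_{\mathbb{D}'}}=\Lambda(\mathbb{D}')$ with $\mathbb{D}'\in\mathbf{Ind}_{\mathcal{C}}$, and put $\mathbb{D}:=s_J(\mathbb{D}')\in\mathbf{St}(\mathcal{C},J)$. By Theorem \ref{slice of 2-topos}, the domain $\mathbf{St}(\mathcal{G}(\mathbb{D}'),J_{\mathbb{D}'})$ of $\Lambda(\mathbb{D}')$ is biequivalent to $\mathbf{St}(\mathcal{C},J)/^{\mathrm{fib}}s_J(\mathbb{D}')$, and the domain $\mathbf{St}(\mathcal{G}(\mathbb{D}),J_{\mathbb{D}})$ of $\Phi(\mathbb{D})$ is biequivalent to $\mathbf{St}(\mathcal{C},J)/^{\mathrm{fib}}s_J(s_J(\mathbb{D}'))$, while $s_J(s_J(\mathbb{D}'))\simeq s_J(\mathbb{D}')$ because $s_J\dashv i_J$ is a reflection (Theorem \ref{stackification}). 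The comparison between these two domains is realized by the geometric morphism $C^{\mathbf{St}}_{\mathcal{G}(\eta_{\mathbb{D}'})}$ attached to the comorphism of sites $\mathcal{G}(\eta_{\mathbb{D}'})\colon(\mathcal{G}(\mathbb{D}'),J_{\mathbb{D}'})\to(\mathcal{G}(\mathbb{D}),J_{\mathbb{D}})$ induced by the stackification unit; functoriality of $C^{\mathbf{St}}_{(-)}$ and the identity $p_{\mathbb{D}}\circ\mathcal{G}(\eta_{\mathbb{D}'})= p_{\mathbb{D}'}$ (up to canonical isomorphism) make it a morphism $\Lambda(\mathbb{D}')\to\Phi(\mathbb{D})$ in the slice, and the biequivalences above show it is an equivalence. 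Hence $\Phi(\mathbb{D})\simeq C$, so $\Phi$ is essentially surjective and therefore a biequivalence onto the full sub-$2$-category of $2$-local homeomorphisms.

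I expect the crux to be this last step: checking that the comparison $C^{\mathbf{St}}_{\mathcal{G}(\eta_{\mathbb{D}'})}$ is an equivalence \emph{in the slice over} $\mathbf{St}(\mathcal{C},J)$, i.e.\ that the biequivalences of Theorem \ref{slice of 2-topos}, together with the idempotency $s_J i_J s_J\simeq s_J$, are compatible with the structural geometric morphisms $C^{\mathbf{St}}_{p_{(-)}}$. Concretely one must verify that, under the identification $\mathbf{St}(\mathcal{G}(\mathbb{D}),J_{\mathbb{D}})\simeq\mathbf{St}(\mathcal{C},J)/^{\mathrm{fib}}s_J(\mathbb{D})$, the $2$-local homeomorphism $C^{\mathbf{St}}_{p_{\mathbb{D}}}$ corresponds to the canonical ``project to the base'' geometric morphism, so that the stackification comorphism becomes a trivial reindexing. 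A minor, purely formal point needed for local full faithfulness is the identification of the restricted unit of $\Lambda\dashv\Gamma$ with the stackification unit under the isomorphism of Theorem \ref{stackification problem}; this should follow from the triangle identities together with the way that isomorphism is constructed.
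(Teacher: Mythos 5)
There is a genuine structural gap: your argument is circular relative to the paper's logical order. Both your local-full-faithfulness step and your corestriction of $\Gamma$ to stacks rest on the isomorphism $\Gamma\circ\Lambda\cong i_J\circ s_J$ of Theorem \ref{stackification problem}; but in the paper that theorem is itself \emph{deduced from} Proposition \ref{keq prop} (its proof explicitly invokes ``apply Theorem \ref{slice of 2-topos} and Proposition \ref{keq prop}'' to pass from hom-categories of $2$-local homeomorphisms over the base to hom-categories of stacks, before concluding by Yoneda). So you cannot use Theorem \ref{stackification problem} here without first establishing the very equivalence you are trying to prove. To repair the full-faithfulness half you would need an independent argument that $\Phi=\Lambda\circ i_J$ is locally an equivalence on hom-categories, e.g.\ by computing $\textit{2-}\mathbf{Topos}^{\mathrm{co}}/_{2}\mathbf{St}(\mathcal{C},J)\bigl(\Pi_{\mathbb{D}},\Pi_{\mathbb{E}}\bigr)$ directly from the identification $\mathbf{St}(\mathcal{G}(\mathbb{D}),J_{\mathbb{D}})\simeq\mathbf{St}(\mathcal{C},J)/^{\mathrm{fib}}\mathbb{D}$ of Theorem \ref{slice of 2-topos}, which is what the paper's machinery is set up to do.

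Beyond that, your route genuinely differs from the paper's. The paper constructs an explicit quasi-inverse $\Psi$ sending a $2$-local homeomorphism $\Pi_{\mathbb{D}}$ to $(C^{\mathbf{St}}_{p})_{!}(\Delta\mathbbm{1})$, i.e.\ it recovers the stack as the image of the terminal object under the extra left adjoint of the essential geometric morphism; it never routes through the $\Lambda\dashv\Gamma$ adjunction at all. Your essential-surjectivity argument (identifying $\Lambda(\mathbb{D}')$ with $\Lambda(i_Js_J\mathbb{D}')$ via the comorphism $\mathcal{G}(\eta_{\mathbb{D}'})$ and the idempotency of $s_J$) is sound in outline, correctly isolates the real crux --- compatibility of the biequivalences of Theorem \ref{slice of 2-topos} with the structural morphisms to $\mathbf{St}(\mathcal{C},J)$ --- and is a step the paper's own sketch leaves implicit; but as it stands only this half of your proof survives the circularity problem.
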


\begin{proof}
	Define
	\[
	\Phi:\mathbf{St}(\mathcal{C},J)\rightarrow  \textit{2-}\mathbf{Topo}/_{2}^{et}\mathbf{St}(\mathcal{C},J)
	\]
	It sends $J$-stack $\mathbb{D}$ to
	\[
	\Pi_{\mathbb{D}}:\mathbf{St}(\mathcal{C},J)/^{\mathrm{fib}}\mathbb{D} \rightarrow \mathbf{St}(\mathcal{C},J)
	\] 
	Here the definition of
	$\Pi_{\mathbb{D}}$ is
	\[
		\mathbf{St}(\mathcal{C},J)/^{\mathrm{fib}}\mathbb{D} \stackrel{\sim}{\rightarrow} \mathbf{St}(\mathcal{D},J_{D}) \stackrel{C^{\mathbf{St}}_{p}}{\rightarrow} \mathbf{St}(\mathcal{C},J)
	\]
	We construct the quasi-inverse of $\Phi$:
	\[
	\Psi:\textit{2-}\mathbf{Topo}/_{2}^{et}\mathbf{St}(\mathcal{C},J)\rightarrow \mathbf{St}(\mathcal{C},J)
	\]
	It sends
	\[
		\Pi_{\mathbb{D}}:\mathbf{St}(\mathcal{C},J)/^{\mathrm{fib}}\mathbb{D} \rightarrow \mathbf{St}(\mathcal{C},J)
	\]
	to $(C^{\mathbf{St}}_{p})_{!}(\Delta \mathbbm{1})$ ,where $p$ is the fibration associated to the indexed category $\mathbb{D}$.
\end{proof}

\section{Stackification via adjunction}

\begin{thm}
	\label{stackification problem}
	Recall the adjunction mentioned in Theorem \ref{gamma lambda},
	we have
\[
	\Gamma_{\textit{2-}\mathbf{Topos}^{\mathrm{co}}/_{2}\mathbf{St}(\mathcal{C},J)}\circ \Lambda_{\textit{2-}\mathbf{Topos}^{\mathrm{co}}/_2{}\mathbf{St}(\mathcal{C},J)}
	\cong i_J\circ s_J
\]
and the adjunciton
$\Lambda_{\textit{2-}\mathbf{Topos}^{\mathrm{co}}/_2{}\mathbf{St}(\mathcal{C},J)}\dashv\Gamma_{\textit{2-}\mathbf{Topos}^{\mathrm{co}}/_{2}\mathbf{St}(\mathcal{C},J)}$
can be restrict to an adjoint biequivalence $\Lambda^{'}\dashv\Gamma^{'}$:
\[\begin{tikzcd}
	{\mathbf{Ind}_{\mathcal{C}}} &&& {\textit{2-}\mathbf{Topos}^{\mathrm{co}}/_{2}\mathbf{St}(\mathcal{C},J)} \\
	\\
	\\
	{\mathbf{St}(\mathcal{C},J)} &&& {\textit{2-}\mathbf{Topos}^{\mathrm{co}}/^{et}_{2}\mathbf{St}(\mathcal{C},J)}
	\arrow[""{name=0, anchor=center, inner sep=0}, "{\Lambda_{\textit{2-}\mathbf{Topos}^{\mathrm{co}}/_2{}\mathbf{St}(\mathcal{C},J)}}"', curve={height=24pt}, tail reversed, no head, from=1-4, to=1-1]
	\arrow[""{name=1, anchor=center, inner sep=0}, "{\Gamma_{\textit{2-}\mathbf{Topos}^{\mathrm{co}}/_{2}\mathbf{St}(\mathcal{C},J)}}", curve={height=-24pt}, from=1-4, to=1-1]
	\arrow[hook', from=4-1, to=1-1]
	\arrow["{\Lambda^{'}}", curve={height=-24pt}, from=4-1, to=4-4]
	\arrow["\sim"{description}, draw=none, from=4-1, to=4-4]
	\arrow[hook', from=4-4, to=1-4]
	\arrow["{\Gamma^{'}}", curve={height=-24pt}, from=4-4, to=4-1]
	\arrow["\dashv"{anchor=center, rotate=-90}, draw=none, from=0, to=1]
\end{tikzcd}\]
\end{thm}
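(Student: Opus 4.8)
The plan is to reduce the statement to Theorem~\ref{slice of 2-topos} and Proposition~\ref{keq prop}, together with the fibred Yoneda lemma and the stackification adjunction $s_J\dashv i_J$ of Theorem~\ref{stackification}. Write $\Lambda$ and $\Gamma$ for the two functors of Theorem~\ref{gamma lambda}, and let $\iota\colon\textit{2-}\mathbf{Topos}^{\mathrm{co}}/^{et}_{2}\mathbf{St}(\mathcal{C},J)\hookrightarrow\textit{2-}\mathbf{Topos}^{\mathrm{co}}/_{2}\mathbf{St}(\mathcal{C},J)$ be the inclusion of the full sub-2-category of $2$-local homeomorphisms. The first step is a \emph{slice description} of $\Lambda$. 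By construction $\Lambda(\mathbb{D})=C^{\mathbf{St}}_{p_{\mathbb{D}}}\colon\mathbf{St}(\mathcal{G}(\mathbb{D}),J_{\mathbb{D}})\to\mathbf{St}(\mathcal{C},J)$; Theorem~\ref{slice of 2-topos} gives an equivalence $\mathbf{St}(\mathcal{G}(\mathbb{D}),J_{\mathbb{D}})\simeq\mathbf{St}(\mathcal{C},J)/^{\mathrm{fib}}s_J(\mathbb{D})$, and I would check that it is compatible with the two projections down to $\mathbf{St}(\mathcal{C},J)$, so that under it $C^{\mathbf{St}}_{p_{\mathbb{D}}}$ becomes the forgetful functor, i.e.\ the $2$-local homeomorphism $\Pi_{s_J(\mathbb{D})}=\Phi(s_J(\mathbb{D}))$ of Proposition~\ref{keq prop}. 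Thus $\Lambda\simeq\iota\circ\Phi\circ s_J$ pseudonaturally, and in particular $\iota\circ\Phi$ is, up to equivalence, the identification of $\mathbf{St}(\mathcal{C},J)$ with the $2$-local homeomorphisms. Applying the same identification to the representable indexed category $\yo(X)$ (whose Grothendieck construction is $\mathcal{C}/X$ with Giraud topology $J_X$) shows that the object $\mathbf{St}(\mathcal{C}/X,J_X)\to\mathbf{St}(\mathcal{C},J)$ occurring in the definition of $\Gamma$ is $\Phi(s_J(\yo(X)))$.

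The first assertion is then a formal computation. Since $2$-local homeomorphisms are essential, for a stack $\mathbb{E}$ the value of $\Gamma$ at $\iota\Phi(\mathbb{E})$ may be computed in $\textit{2-}\mathbf{Topos}^{\mathrm{co}}/_{2}\mathbf{St}(\mathcal{C},J)$, and one chains
\[
\Gamma(\iota\Phi(\mathbb{E}))(X)\simeq\textit{2-}\mathbf{Topos}^{\mathrm{co}}/^{et}_{2}\mathbf{St}(\mathcal{C},J)\bigl(\Phi(s_J\yo(X)),\Phi(\mathbb{E})\bigr)\simeq\mathbf{St}(\mathcal{C},J)\bigl(s_J\yo(X),\mathbb{E}\bigr)\simeq\mathbf{Ind}_{\mathcal{C}}\bigl(\yo(X),i_J\mathbb{E}\bigr)\simeq(i_J\mathbb{E})(X),
\]
the first step being the slice description plus fullness of $\iota$, the second the biequivalence $\Phi$ of Proposition~\ref{keq prop}, the third the adjunction $s_J\dashv i_J$, and the fourth the fibred Yoneda lemma. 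Tracking pseudonaturality in $X$ and in $\mathbb{E}$ yields $\Gamma\circ\iota\circ\Phi\simeq i_J$, whence $\Gamma\circ\Lambda\simeq\Gamma\circ\iota\circ\Phi\circ s_J\simeq i_J\circ s_J$, which is the first displayed isomorphism of the theorem.

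For the second assertion, the slice description shows that restricting $\Lambda$ along $i_J\colon\mathbf{St}(\mathcal{C},J)\hookrightarrow\mathbf{Ind}_{\mathcal{C}}$ gives $\Lambda'\simeq\iota\circ\Phi\circ s_J\circ i_J\simeq\iota\circ\Phi$ (using $s_J\circ i_J\simeq\mathrm{id}$), a local equivalence onto the full sub-2-category of $2$-local homeomorphisms; dually, the computation above shows $\Gamma$ carries $2$-local homeomorphisms into $\mathbf{St}(\mathcal{C},J)$ via a functor $\Gamma'$ equivalent to the quasi-inverse $\Psi$ of $\Phi$. Since $\mathbf{St}(\mathcal{C},J)$ and $\textit{2-}\mathbf{Topos}^{\mathrm{co}}/^{et}_{2}\mathbf{St}(\mathcal{C},J)$ are full sub-2-categories closed respectively under $\Lambda$ and $\Gamma$, the $2$-adjunction $\Lambda\dashv\Gamma$ of Theorem~\ref{gamma lambda} restricts to a $2$-adjunction $\Lambda'\dashv\Gamma'$; and since $\Lambda'$, resp.\ $\Gamma$ restricted to $2$-local homeomorphisms, is a local equivalence, the standard argument relating fully faithfulness of a left, resp.\ right, adjoint to invertibility of the unit, resp.\ counit, shows the unit of $\Lambda\dashv\Gamma$ is an equivalence on $\mathbf{St}(\mathcal{C},J)$ and the counit is an equivalence on $2$-local homeomorphisms. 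Hence $\Lambda'\dashv\Gamma'$ is an adjoint biequivalence.

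I expect the main obstacle to be the slice description of the first step, namely checking that the equivalence $\mathbf{St}(\mathcal{G}(\mathbb{D}),J_{\mathbb{D}})\simeq\mathbf{St}(\mathcal{C},J)/^{\mathrm{fib}}s_J(\mathbb{D})$ of Theorem~\ref{slice of 2-topos} is really an equivalence \emph{over} $\mathbf{St}(\mathcal{C},J)$, so that $C^{\mathbf{St}}_{p_{\mathbb{D}}}$ is literally the forgetful functor $\Pi_{s_J(\mathbb{D})}$. This is the stacky counterpart of the classical fact $\mathbf{Sh}(\int P,J_P)\simeq\mathbf{Sh}(\mathcal{C},J)/a_J(P)$ over $\mathbf{Sh}(\mathcal{C},J)$ underlying the $1$-dimensional case; once it is in place, the remaining steps are a routine chase through the fibred Yoneda lemma and the two adjunctions $s_J\dashv i_J$ and $\Lambda\dashv\Gamma$.
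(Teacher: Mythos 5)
Your proposal is correct and follows essentially the same route as the paper: identify $\Lambda(\mathbb{D})$ with the $2$-local homeomorphism $\Pi_{s_J(\mathbb{D})}$ via Theorem~\ref{slice of 2-topos}, reduce the hom-computation defining $\Gamma$ through the equivalence of Proposition~\ref{keq prop}, and conclude by Yoneda (which the paper invokes in one stroke where you factor it through $s_J\dashv i_J$ plus fibred Yoneda). Your treatment is in fact more explicit than the paper's at the two points it glosses over --- the verification that the equivalence of Theorem~\ref{slice of 2-topos} lives over $\mathbf{St}(\mathcal{C},J)$, and the unit/counit argument for the restricted adjoint biequivalence, which the paper asserts without proof.
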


\begin{proof}
	For any indexed category $\mathbb{D}$, we have
\[
	\Gamma_{\textit{2-}\mathbf{Topos}^{\mathrm{co}}/_{2}\mathbf{St}(\mathcal{C},J)}
	\circ \Lambda_{\textit{2-}\mathbf{Topos}^{\mathrm{co}}/_2{}\mathbf{St}(\mathcal{C},J)}(\mathbb{D})
	=\textit{2-}\mathbf{Topos}^{\mathrm{co}}/_{2}\mathbf{St}(\mathcal{C},J)(\Pi_{s_J\yo(-)},\Pi_{\mathbb{D}})
\]
	apply Theorem \ref{slice of 2-topos} and Proposition \ref{keq prop}, here is
\[
	\textit{2-}\mathbf{Topos}^{\mathrm{co}}/_{2}\mathbf{St}(\mathcal{C},J)(\Pi_{s_J\yo(-)},\Pi_{s_J\mathbb{D}})
	\simeq
	\mathbf{St}(\mathcal{C},J)(s_J\yo(-),s_J\mathbb{D})
\]
	Again, by Yoneda's Lemma, we have
\[
	\Gamma_{\textit{2-}\mathbf{Topos}^{\mathrm{co}}/_{2}\mathbf{St}(\mathcal{C},J)}\circ \Lambda_{\textit{2-}\mathbf{Topos}^{\mathrm{co}}/_2{}\mathbf{St}(\mathcal{C},J)}
	\cong i_J\circ s_J
\]
Then we have $(\Lambda^{'}\dashv \Gamma^{'})$ is an adjoint biequivalence.
\end{proof}

\bibliography{biblio}{}
\bibliographystyle{abbrv}

\end{document}